\newtheorem{theorem}{Theorem}[section]
\newtheorem{lemma}[theorem]{Lemma}
\newtheorem{proposition}[theorem]{Proposition}
\newtheorem{corollary}[theorem]{Corollary}
\theoremstyle{definition}
\newtheorem{definition}[theorem]{Definition}
\theoremstyle{remark}
\newtheorem{remark}[theorem]{Remark}
\newcommand{\A}{\mathcal A}
\newcommand{\B}{\mathcal B}
\renewcommand{\L}{\mathcal L}
\newcommand{\M}{\mathcal M}
\renewcommand{\O}{\mathcal O}
\newcommand{\R}{\mathcal R}
\renewcommand{\S}{\mathcal S}
\renewcommand{\SS}{\mathbb S}
\newcommand{\iso}{\cong}
\newcommand{\cat}{\mathsf}
\newcommand{\union}{\cup}
\newcommand{\subsetof}{\subseteq}
\renewcommand{\:}{\colon}
\newcommand{\upset}{{\uparrow}}
\newcommand{\downset}{{\downarrow}}
\begin{document}

\title{The category of topological spaces and open maps does not have products}

\author{Guram Bezhanishvili}
\address{New Mexico State University}
\email{guram@nmsu.edu}

\author{Andre Kornell}
\address{Dalhousie University}
\email{akornell@dal.ca}

\thanks{Andre Kornell was supported by the Air Force Office of Scientific Research under Award No.~FA9550-21-1-0041.}

\subjclass[2020]{18F60; 54B10; 54C10; 06D20; 06E25; 03B45}
\keywords{Topological space; open map; category; product; Heyting algebra; Kripke frame}

\begin{abstract}
    We prove that the category of topological spaces and open maps does not have binary products, thus resolving the Esakia problem in the negative. We also prove that the category of Kripke frames does not have binary products and that the category of complete Heyting algebras does not have binary coproducts. 
\end{abstract}

\maketitle

\section{Introduction}

It is well known that the category $\cat{Top}$ of topological spaces and continuous maps has products and that they are described by the standard topological product construction. 
The situation changes when we consider the category $\cat{TopOpen}$ of topological spaces and \emph{open maps},
which are the continuous maps such that the image of each open set is open. 
The existence of products in $\cat{TopOpen}$ was explicitly stated by Esakia as an open problem as early as the late 1980s
at the Esakia-Janelidze seminar at Tbilisi State University.
More recently, the problem was revisited on MathOverflow \cite{117174} with no clear resolution. We prove that $\cat{TopOpen}$ does not have products, thus resolving the Esakia problem in the negative.

Specifically, we prove that the product of the Sierpi\'{n}ski space $\SS$ with itself does not exist in $\cat{TopOpen}$. We argue by contradiction. We suppose that such a product does exist, and we prove that arbitrarily large posets embed into this product when it is equipped with the opposite of its specialization order.
We obtain these arbitrarily large posets as subsets of the von Neumann universe $V$, which we order by the reflexive transitive closure of the membership relation. 
Throughout, we view posets, and more generally, preordered sets, as topological spaces with the convention that \emph{downsets} are open.

We also prove
that the product of $\SS$ with itself does not exist in the full subcatgeories of $\cat{TopOpen}$ that consist of $T_0$-spaces and of sober spaces. If we view $\SS$ as the two-element poset with $0<1$, then its product with itself also fails to exist in the category $\cat{PreOpen}$ of preordered sets and open maps and in its full subcategories that consist of posets and of well-ordered posets.

In contrast, as was communicated to us by Peter Johnstone, binary products do exist in the wide subcategory of $\cat{TopOpen}$ whose morphisms are local homeomorphisms. In addition, arbitrary products exist in the full subcategory of $\cat{TopOpen}$ consisting of hyperstonean spaces. Indeed, this subcategory is contravariantly equivalent to the category of commutative von Neumann algebras and unital normal $*$-homomorphsims \cite[Thm.~1.1]{MR4310049}, which is known to have coproducts \cite[Sec.~8.2]{MR0201989}.

As a consequence, we prove that the coproduct of the three-element Heyting algebra with itself does not exist in the category $\cat{CHA}$ of complete Heyting algebras and complete Heyting algebra homomorphisms. From this, we derive De Jongh's theorem \cite{deJ80} that the free complete Heyting algebra on two-generators does not exist. We also observe that our main results are not a direct corollary of De Jongh's theorem by showing that the open set functor $\O$ from $\cat{TopOpen}$, as well as the subcategories that we consider in this paper, into $\cat{CHA}$ does not have a right adjoint. 

We conclude the paper with an application of our results to modal logic by showing that the category of Kripke frames does not have binary products. From this, we derive that the category of complete BAOs, i.e., Boolean algebras with an operator, does not have binary coproducts either.

\section{Hereditary antichains} \label{sec: antichains}

As usual, a preorder is a reflexive and transitive relation $\le$. For a subset $S$ of a preordered class $P$, we define ${\downarrow}S = \{ p \in P : p\le s \mbox{ for some } s\in S\}$, and we define ${\uparrow}S$ similarly. Then, $S$ is a {\em downset} if ${\downarrow}S=S$ and an {\em upset} if ${\uparrow}S =S$.

\begin{definition}
Let $P$ be a partially ordered class. An \emph{antichain} of $P$ is a set $x \subseteq P$ that satisfies $p_1 \not < p_2$ for all $p_1, p_2 \in x$. An antichain $x$ is \emph{trivial} if it is empty or a singleton; otherwise, it is \emph{nontrivial}. A set $M \subseteq P$ is \emph{convex} if $p \leq q \leq r$ with $p,r \in M$ implies that $q \in M$.
\end{definition}

\begin{definition}
Let $V$ be the class of hereditary sets, that is, the von Neumann universe.
We partially order $V$ by defining $x < y$ if $x$ is in the transitive closure of $y$. Thus, $x < y$ iff $x \in x_1 \in \cdots \in x_n \in y$ for some sets $x_1, \ldots, x_n$, where $n$ may be zero. For each convex set $M \subseteq V$, we define a cumulative hierarchy $S_\alpha(M)$ by transfinite recursion on the ordinal $\alpha$:
\begin{enumerate}
\item $S_0(M) = M$,
\item $S_{\alpha +1}(M) = S_\alpha(M) \cup \{x : x\text{ is a nontrivial antichain of } S_{\alpha}(M)\}$,
\item $S_\alpha(M) = \bigcup\{S_\beta(M) : \beta < \alpha\}$ for each limit ordinal $\alpha$.
\end{enumerate}
We define $S(M) = \bigcup\{S_\alpha(M): \alpha \text{ is an ordinal}\}$, and we refer to $S(M)$ as the class of \emph{hereditary nontrivial antichains over $M$}.
\end{definition}

Each element of $S(M)$ is either an element of $M$ or a nontrivial antichain of $S(M)$.
When $M$ is an antichain of $V$,
the class $S(M)$ is a subclass of $V(M)$, the class of hereditary sets over $M$ \cite[p.~250]{Jech2006}. 
The following claims are routine to verify.

\begin{lemma}\label{lem: auxiliary}
Let $\alpha$ be an ordinal.
\begin{enumerate}[label=\normalfont(\arabic*), ref = \thelemma(\arabic*)]
\item $S_\alpha(M)$ is a downset of $S(M)$ for each convex set $M \subseteq V$, \label[lemma]{lem: auxiliary 1}
\item $S_{\alpha+1}(M) \setminus S_\alpha(M)$ is an antichain of $S(M)$ for each convex set $M \subseteq V$, \label[lemma]{lem: auxiliary 2}
\item $S_\alpha(M) = S(M) \cap S_\alpha(M')$ for all antichains $M, M' \subseteq V$ such that $M \subseteq M'$, \label[lemma]{lem: auxiliary 3}
\item $S(M) \subseteq S(M')$ for all convex sets $M, M' \subseteq V$ such that $M \subseteq S_\alpha(M')$. \label[lemma]{lem: auxiliary 4}
\end{enumerate}
\end{lemma}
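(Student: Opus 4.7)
The plan is to establish all four parts by transfinite induction on $\alpha$, with (2) following immediately from (1). Indeed, if $x, y \in S_{\alpha+1}(M) \setminus S_\alpha(M)$ satisfied $x < y$, then (1) would force $x$ into the downset $S_\alpha(M)$, a contradiction.

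For (1), the successor and limit cases are routine: at a successor, one traces the $\in$-chain witnessing $x < y$ down to some element of $y \subseteq S_\alpha(M)$ and applies the inductive hypothesis, while limits are handled by unwrapping the union. The core of the argument is the base case $\alpha = 0$: given $y \in M$ and $x \in S(M)$ with $x < y$, I want to conclude $x \in M$. If not, the set $T = (\mathrm{TC}(\{x\}) \cap S(M)) \setminus M$ contains $x$, so by well-foundedness of $\in$ it has a $\in$-minimal element $z$. Since $z \in S(M) \setminus M$, it is a nontrivial antichain of $S(M)$, so all of its members lie in $S(M)$, and by minimality each such member lies in $M$. Choosing any $w \in z$, one obtains $w < z < y$ with $w, y \in M$, and convexity of $M$ forces $z \in M$, contradicting $z \in T$.

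For (3), I would prove both inclusions by transfinite induction on $\alpha$. The inclusion $S_\alpha(M) \subseteq S_\alpha(M')$ is straightforward, since a nontrivial antichain of $S_\alpha(M)$ remains a nontrivial antichain of the larger set $S_\alpha(M')$. The reverse inclusion $S(M) \cap S_\alpha(M') \subseteq S_\alpha(M)$ is more delicate. Its base case $S(M) \cap M' \subseteq M$ uses the same $\in$-minimality trick as (1), but now the contradiction comes from the antichain property of $M'$: a $\in$-minimal witness $z$ in $(\mathrm{TC}(\{x\}) \cap S(M)) \setminus M$ has at least two members, all in $M \subseteq M'$, all sitting strictly below $x \in M'$, violating that $M'$ is an antichain. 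The successor step is then bookkeeping: the members of a new antichain $x \in S_{\alpha+1}(M') \setminus S_\alpha(M')$ that lie in $S(M)$ fall into $S_\alpha(M)$ by induction, so $x$ itself is a nontrivial antichain of $S_\alpha(M)$, hence in $S_{\alpha+1}(M)$.

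Part (4) is a transfinite induction on $\beta$ showing $S_\beta(M) \subseteq S(M')$: the base case is exactly the hypothesis $M \subseteq S_\alpha(M')$, and at the successor step, a nontrivial antichain $x$ of $S_\beta(M) \subseteq S(M')$ is, by replacement, contained in some single $S_\gamma(M')$, so that $x \in S_{\gamma+1}(M') \subseteq S(M')$. I expect the main obstacle to be the base case of (1), since that is the one place where convexity, $\in$-well-foundedness, and the recursive structure of $S(M)$ must all be combined at once; once that is nailed down, the parallel base case of (3) goes through by the same template, and the successor and limit steps in all four parts are essentially bookkeeping.
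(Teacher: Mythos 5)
The paper gives no proof of this lemma (it simply records that the claims are routine to verify), and your transfinite-induction argument correctly supplies the verification: the $\in$-minimal-element argument combining convexity of $M$ in the base case of (1), the parallel use of the antichain property of $M'$ in the base case of (3), and the replacement argument in the successor step of (4) are exactly the points that need care, and all of them are sound, as are the successor and limit steps of (1). Two places are stated more compactly than they should be, though both are repaired by facts you already invoke elsewhere: in deriving (2) from (1), apply (1) at level $\alpha$ not to $y$ itself (which lies outside $S_\alpha(M)$) but to some $z \in y \subseteq S_\alpha(M)$ with $x \le z$, obtained by tracing the membership chain witnessing $x < y$; and in the successor step of the inclusion $S(M) \cap S_{\alpha+1}(M') \subseteq S_{\alpha+1}(M)$ in (3), note explicitly that since $x \in S(M)$, either $x \in M$ (a trivial case) or $x$ is a nontrivial antichain of $S(M)$, so that \emph{all} members of $x$ lie in $S(M) \cap S_\alpha(M')$ and the induction hypothesis applies to each of them, making $x$ a nontrivial antichain of $S_\alpha(M)$.
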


\begin{lemma}\label{antichain lemma}
Let $M$ be a set, and let $m' \not \in M$. If $M' = M \cup \{m'\}$ is an antichain of $V$, then
\begin{enumerate}[label=\normalfont(\arabic*), ref = \thelemma(\arabic*)]
\item $\{x, m'\}$ is a nontrivial antichain for all $x \in S(M)$,
\item $\{\{x, m'\}: x \in A\}$ is an antichain of $S(M')$ for all subsets $A \subseteq S(M)$.
\end{enumerate}
\end{lemma}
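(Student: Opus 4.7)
The plan is to first establish the auxiliary fact that $m' \notin S(M)$, and then deduce both parts from this together with the antichain property of $M'$, the inclusion $S(M) \subseteq S(M')$, and the axiom of foundation.

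For the auxiliary fact, I would prove by transfinite induction on $\alpha$ that every $y \in S_\alpha(M)$ admits some $m \in M$ with $m \leq y$ in $V$. The base case is immediate. In the successor step $\alpha = \beta + 1$, if $y$ is a nontrivial antichain of $S_\beta(M)$, then any $y' \in y$ lies in $S_\beta(M)$, and the inductive hypothesis provides $m \in M$ with $m \leq y' < y$, hence $m < y$; the limit step is routine. Now if $m' \in S(M)$, this fact yields some $m \in M$ with $m \leq m'$, and since $m' \notin M$ we obtain $m < m'$, contradicting that $M'$ is an antichain.

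For part (1), nontriviality of $\{x, m'\}$ is immediate, since $x \in S(M)$ and $m' \notin S(M)$ force $x \neq m'$. For $x \not< m'$: the auxiliary fact yields $m \in M$ with $m \leq x$, so $x < m'$ would give $m < m'$, contradicting that $M'$ is an antichain. For $m' \not< x$: transfinite induction on $\alpha$ with $x \in S_\alpha(M)$, the base case $x \in M$ following from the antichain property. At the successor step, if $x$ is a nontrivial antichain of $S_\beta(M)$ and $m' \in \text{TC}(x)$, then either $m' \in x$ (placing $m' \in S_\beta(M) \subseteq S(M)$, a contradiction) or $m' \in \text{TC}(y)$ for some $y \in x \subseteq S_\beta(M)$ (contradicting the inductive hypothesis). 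The containment $\{x, m'\} \subseteq S(M')$ follows from \Cref{lem: auxiliary 4} applied with $\alpha = 0$.

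For part (2), each $\{x, m'\}$ with $x \in A$ is a nontrivial antichain of $V$ contained in some $S_\beta(M')$ by \Cref{lem: auxiliary 3}, hence belongs to $S(M')$. I would then show that for distinct $x_1, x_2 \in A$ the sets $\{x_1, m'\}$ and $\{x_2, m'\}$ are distinct and incomparable in $V$; distinctness is clear since $x_i \neq m'$. Suppose $\{x_1, m'\} < \{x_2, m'\}$; then $\{x_1, m'\}$ lies in $\text{TC}(\{x_2, m'\}) = \{x_2, m'\} \cup \text{TC}(x_2) \cup \text{TC}(m')$, yielding four cases. Three of them produce either $m' < x_2$ or $x_1 < m'$, contradicting part (1); the remaining case $\{x_1, m'\} = m'$ contradicts foundation. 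Symmetry handles the reverse inequality, so $\{\{x, m'\} : x \in A\}$ is an antichain of $S(M')$. I expect the main obstacle to be formulating the transfinite induction for $m' \not< x$ cleanly, since it interleaves the structural induction on $S(M)$ with a use of the well-foundedness of $\in$ in $V$.
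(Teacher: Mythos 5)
Your proof is correct and follows essentially the same route as the paper: a transfinite induction up the hierarchy $S_\alpha(M)$ for part (1), and an unpacking of the transitive closure of $\{x_2,m'\}$ into cases that contradict part (1) (or foundation) for part (2). The only difference is organizational: you isolate as an explicit auxiliary fact that every element of $S(M)$ lies above some element of $M$ (hence $m'\notin S(M)$), which the paper uses implicitly in the step ``there exists $m\in M$ with $m\le x\le m'$,'' so that in your version only the direction $m'\not< x$ needs the induction.
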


\begin{proof}
Assume that $M' = M \cup \{m'\}$ is an antichain.
We prove that $\{x, m'\}$ is a nontrivial antichain for all $x \in S_\alpha(M)$ by transfinite induction on $\alpha$. Assume that $\{x, m'\}$ is a nontrivial antichain for all $x \in S_\beta(M)$ and all $\beta < \alpha$. If $\alpha$ is zero, then $\{x, m'\}$ is a nontrivial antichain for all $x \in S_\alpha(M) = S_0(M) = M$ because $M'$ is an antichain and $m' \not \in M$. If $\alpha$ is limit, then $\{x, m'\}$ is a nontrivial antichain for all $x \in S_\alpha(M)$ by the induction hypothesis because $S_\alpha(M) = \bigcup\{S_\beta(M) : \beta < \alpha\}$.

Assume that $\alpha$ is successor, let $x \in S_\alpha(M)$, and suppose that $\{x, m'\}$ is not a nontrivial antichain. It follows that $x \leq m'$ or $m' < x$. In the former case, there exists $m \in M$ such that $m \leq x \leq m'$, contradicting that $M'$ is an antichain and $m' \not \in M$. In the latter case, $x \in M$, contradicting that $M'$ is an antichain, or there exists $y \in x \subseteq S_{\alpha-1}(M)$ such that $m' \leq y$, contradicting the induction hypothesis. Thus, $\{x, m'\}$ is a nontrivial antichain. Therefore, by induction on $\alpha$, $\{x, m'\}$ is a nontrivial antichain for all $x \in S_\alpha(M)$ and all ordinals $\alpha$. We have proved claim (1).

Let $A$ be a subset of $S(M)$, and suppose that $\{\{x, m'\} : x \in A\}$ is not an antichain. Then, $\{x, m'\} < \{y, m'\}$ for some $x, y \in A$, and hence, $\{x, m'\} \leq y$ or $\{x, m'\} \leq m'$. In the former case, $m' < y$, contradicting that $\{y, m'\}$ is an antichain, and in the latter case, $x < m'$, contradicting that $\{x, m'\}$ is an antichain. Therefore, $\{\{x, m'\} : x \in A\}$ is an antichain. We have proved claim (2).
\end{proof}

\begin{theorem}\label{cardinality theorem}
Let $M$ be an antichain of $V$. If $M$ has three or more elements, then $S(M)$ is a proper class.
\end{theorem}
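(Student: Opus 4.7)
My plan is to prove $S(M)$ is a proper class by establishing, via transfinite induction on $\alpha$, the stronger statement that $|S_{\alpha+1}(M) \setminus S_\alpha(M)| \geq 3$ for every ordinal $\alpha$. Since this forces $S_\alpha(M) \subsetneq S_{\alpha+1}(M)$ uniformly in $\alpha$, the union $S(M) = \bigcup_\alpha S_\alpha(M)$ is a class-indexed strictly increasing chain of sets and hence must be a proper class.

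The base case $\alpha = 0$ is immediate. Fix three distinct elements $a,b,c \in M$; the four subsets $\{a,b\}$, $\{a,c\}$, $\{b,c\}$, $\{a,b,c\}$ are all nontrivial antichains of $M = S_0(M)$. None can lie in $M$ itself, for if say $\{a,b\} \in M$ then $a \in \{a,b\} \in M$ would give $a < \{a,b\}$ with both in $M$, contradicting the antichain hypothesis on $M$. Hence all four lie in $S_1(M) \setminus S_0(M)$.

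For the successor case, suppose $X := S_{\beta+1}(M) \setminus S_\beta(M)$ has at least three elements. By \cref{lem: auxiliary 2}, $X$ is an antichain of $S(M)$, so every subset $Y \subseteq X$ with $|Y| \geq 2$ is a nontrivial antichain of $S_{\beta+1}(M)$ and therefore belongs to $S_{\beta+2}(M)$. The downset property \cref{lem: auxiliary 1} together with the antichain property of $M$ rules out $Y \in M$, and since the elements of $Y$ lie outside $S_\beta(M)$, the set $Y$ is not a nontrivial antichain of $S_\beta(M)$ either. Hence $Y \in S_{\beta+2}(M) \setminus S_{\beta+1}(M)$, and there are at least $2^3 - 3 - 1 = 4$ such $Y$, maintaining the induction.

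The limit case $\alpha = \lambda$ is the main obstacle. I would fix a cofinal sequence $(\beta_\xi)_{\xi < \mathrm{cf}(\lambda)}$ in $\lambda$ and, by transfinite recursion on $\xi$, choose $y_\xi \in S_{\beta_\xi+1}(M) \setminus S_{\beta_\xi}(M)$ subject to the constraint that no previously chosen $y_\eta$ with $\eta < \xi$ lies in the transitive closure of $y_\xi$. The reverse direction $y_\xi \not< y_\eta$ is automatic from \cref{lem: auxiliary 1}, since $y_\xi$ sits at a level strictly above $y_\eta$. The resulting set $Y = \{y_\xi : \xi < \mathrm{cf}(\lambda)\}$ is then a nontrivial antichain of $S_\lambda(M)$ whose elements occur at levels cofinal in $\lambda$; consequently $Y$ is not contained in any $S_\gamma(M)$ with $\gamma < \lambda$, placing $Y$ in $S_{\lambda+1}(M) \setminus S_\lambda(M)$, and the infinitely many co-singleton subsets $Y \setminus \{y_\xi\}$ supply far more than three distinct elements of that difference. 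The delicate step is verifying the recursion: one must show that, at each stage $\xi$, the constraints imposed by the earlier $y_\eta$ do not exhaust $S_{\beta_\xi+1}(M) \setminus S_{\beta_\xi}(M)$, which should follow from the rapid growth of this difference set established in the successor case together with the observation that each $y_\eta$ can only appear in the transitive closures of a controlled portion of the candidates.
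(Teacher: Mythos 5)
Your base and successor cases are sound, but the limit case contains a genuine gap: you never establish that the recursion choosing $y_\xi \in S_{\beta_\xi+1}(M)\setminus S_{\beta_\xi}(M)$ can actually be carried through. At stage $\xi$ your induction hypothesis guarantees only that this difference set has at least three elements (when $\beta_\xi$ is itself a limit, three is all you know), while you must avoid placing any of the earlier $y_\eta$ --- of which there may be infinitely many --- in the transitive closure of $y_\xi$; nothing you have proved prevents every available candidate from lying above some $y_\eta$. The remark that ``each $y_\eta$ can only appear in the transitive closures of a controlled portion of the candidates'' is precisely the point that needs proof, and the structure $S(M)$ is genuinely sensitive to such avoidance constraints: for a two-element antichain $\{b,c\}$ one has $S(\{b,c\})=\{b,c,\{b,c\}\}$, a finite set, so requiring candidates to avoid even a single well-chosen element can collapse the hierarchy entirely. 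Repairing the recursion would amount to showing that arbitrarily high levels contain elements avoiding any prescribed family of earlier elements, which is close to a restatement of the theorem itself.

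The paper sidesteps this difficulty by splitting the argument in two. It first proves by transfinite induction that $S_{\alpha+1}(M)\setminus S_\alpha(M)$ is \emph{infinite} whenever $M$ is an \emph{infinite} antichain; at a limit $\alpha$ it writes $M=M_1\cup M_2$ with both pieces infinite and exhibits the explicit new elements $y_m=\{\{x,m\}: x\in S_\alpha(M_1)\}$ for $m\in M_2$, whose membership in $S_{\alpha+1}(M)\setminus S_\alpha(M)$ follows from \cref{antichain lemma} together with a level-counting argument --- no avoidance constraints arise. It then bootstraps from three elements to an infinite antichain inside $S(M)$ using essentially your finite-level argument up to $S_\omega$ combined with \cref{antichain lemma}, and applies the first part to that infinite antichain. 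You would need some version of this reduction, or a genuinely new idea for justifying the recursion, to close the limit case.
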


\begin{proof}
First, we use transfinite induction on $\alpha$ to show that the antichain $S_{\alpha + 1}(M) \setminus S_\alpha(M)$ is infinite for all infinite antichains $M$ and all ordinals $\alpha$.
Assume that $S_{\beta +1}(M) \setminus S_\beta(M)$ is infinite for all infinite antichains $M$ and all ordinals $\beta < \alpha$. Fix an infinite antichain $M$. If $\alpha$ is zero, then $S_{\alpha}(M) = S_0(M) = M$, and all the infinite subsets of $M$ are elements of $S_{\alpha+1}(M) \setminus S_{\alpha}(M) = S_1(M) \setminus M$ because $M$ is an antichain. Thus, $S_{\alpha+1}(M) \setminus S_{\alpha}(M)$ is infinite. Similarly, if $\alpha$ is successor, then $S_{\alpha +1}(M) \setminus S_\alpha(M)$ is an infinite antichain that contains the infinite subsets of the infinite antichain $S_\alpha(M) \setminus S_{\alpha-1}(M)$ as elements.

Assume that $\alpha$ is limit. Let $M = M_1 \cup M_2$ be a decomposition of $M$ into two disjoint infinite subsets $M_1$ and $M_2$. For each $m \in M_2$, let $y_m = \{\{x, m\} : x \in S_\alpha(M_1)\}$.
We find that $y_m \in S(M_1 \union\{m\})$ by \cref{antichain lemma}, so $y_m$ is an antichain of $S(M)$. Since $\alpha$ is a limit ordinal, $\{x, m\} \in S_\alpha(M)$ for all $x \in S_\alpha(M_1)$, and hence, $y_m \in S_{\alpha +1}(M)$. If $y_m \in S_\alpha(M)$, then $y_m \in S_\beta(M)$ for some $\beta < \alpha$, and hence, $x \in S_\beta(M_1)$ for all $x \in S_\alpha(M_1)$, contradicting that $S_{\beta +1}(M_1) \setminus S_\beta(M_1)$ is infinite. Thus, $y_m \in S_{\alpha +1}(M) \setminus S_\alpha(M)$. Since $y_m \neq y_{m'}$ whenever $m \neq m'$, we conclude that $S_{\alpha +1}(M) \setminus S_\alpha (M)$ is infinite.

Therefore, by transfinite induction on $\alpha$, $S_{\alpha+1}(M) \setminus S_\alpha(M)$ is an infinite antichain of $S(M)$ for all infinite antichains $M$ and all ordinals $\alpha$. In particular, $S(M)$ is a proper class for all infinite antichains $M$.

Next, we prove that $S_\omega(M)$ is an infinite set for all antichains $M$ with at least three distinct elements.
Let $M$ be an antichain that contains distinct elements $m_1$, $m_2$, and $m_3$. By \cref{lem: auxiliary 2}, $S_{\alpha +1}(M) \setminus S_\alpha(M)$ is an antichain for all ordinals $\alpha$. We use induction on $\alpha$ to show that this antichain has at least three elements for all $\alpha < \omega$. If $\alpha$ is zero, then $S_{\alpha+1}(M) \setminus S_\alpha(M) = S_1 (M) \setminus M$ contains the antichains $\{m_1, m_2\}$, $\{m_2, m_3\}$, and $\{m_3, m_1\}$. Similarly, if $\alpha$ is successor, $S_{\alpha +1}(M) \setminus S_\alpha(M)$ 
contains at least three distinct doubleton subsets of $S_{\alpha}(M) \setminus S_{\alpha - 1}(M)$. By induction on $\alpha$, $S_{\alpha +1}(M) \setminus S_\alpha(M)$
has at least three elements for all $\alpha < \omega$. Therefore, $S_\omega(M)$ is infinite for all antichains $M$ with at least three elements.

We have shown that $S_\omega(M)$ is infinite for all antichains $M$ with three or more elements and that $S(M)$ is a proper class for all antichains $M$ with infinitely many elements. We combine these conclusions to prove the theorem.

Let $M$ be an antichain that contains distinct elements $m_1$, $m_2$, and $m_3$. Then, the antichain $S_1(M) \setminus M$ contains distinct elements $\{m_1, m_2\}$, $\{m_2, m_3\}$, $\{m_3, m_1\}$ and $\{m_1, m_2, m_3\}$. Let $M' = \{\{m_1, m_2\}, \{m_2, m_3\}, \{m_3, m_1\}\}$. Since $M'$ is an antichain with three or more elements, we know that $S_\omega(M')$ is an infinite set. Applying \cref{antichain lemma}, we find that the set $\{\{x, \{m_1, m_2, m_3\}\}: x \in S_\omega(M')\}$ is an infinite antichain of $S(M' \cup \{\{m_1, m_2, m_3\}\})$. Therefore, $S(\{\{x, \{m_1, m_2, m_3\}\}: x \in S_\omega(M')\})$ is a proper class that is contained in $S(M)$. We conclude that $S(M)$ is itself a proper class.
\end{proof}

\section{Products do not exist in $\cat{TopOpen}$} 

We view each preordered set as a topological space, whose open subsets are its \emph{downsets}. Preordered sets are then exactly the Alexandrov spaces up to homeomorphism, and among them, posets are exactly the Alexandrov $T_0$-spaces (see, e.g., \cite[p.~45]{Joh1982}). It is more common to define the open subsets to be the upsets, but the opposite convention is more convenient for the construction in \cref{sec: antichains}.

Let $\cat{PreOpen}$ be the category of preordered sets and open maps, let $\cat{PosOpen}$ be the full subcategory consisting of posets, and let $\cat{WellOpen}$ be the full subcategory consisting of well-founded posets. Recall that a poset $P$ is said to be \emph{well-founded} if it has no strictly decreasing sequences, that is, if its opposite is Noetherian. Our proof that these three categories do not have binary products relies on a key result, \cref{injective lemma}, which in turn requires the following characterization of open maps between preordered sets.

\begin{lemma}\label{lem: open maps}
Let $P$ and $Q$ be preordered sets, and let $f\: P \to Q$ be a function. The following are equivalent:
\begin{enumerate}[label=\normalfont(\arabic*), ref = \thelemma(\arabic*)]
\item $f$ is an open map,
\item $f[\downset p] = \downset f(p)$ for all $p\in P$,
\item $f^{-1}[\upset q]= \upset f^{-1}(q)$ for all $q \in Q$.
\end{enumerate}
\end{lemma}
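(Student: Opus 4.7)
The plan is to prove the cyclic chain $(1) \Rightarrow (2) \Rightarrow (3) \Rightarrow (1)$. A useful preliminary observation, used repeatedly, is that under the convention that open sets are downsets, a continuous map between preordered sets is monotone: if $p_1 \leq p_2$, then $f^{-1}[\downset f(p_2)]$ is a downset containing $p_2$, hence contains $p_1$, so $f(p_1) \leq f(p_2)$.

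For $(1) \Rightarrow (2)$, I would prove the two inclusions separately: the inclusion $\downset f(p) \subseteq f[\downset p]$ uses openness, since $f[\downset p]$ is a downset that contains $f(p)$; the reverse inclusion $f[\downset p] \subseteq \downset f(p)$ uses monotonicity of $f$, which follows from continuity. For $(2) \Rightarrow (3)$, I would simply rewrite both sides pointwise: $p \in f^{-1}[\upset q]$ iff $q \leq f(p)$ iff $q \in \downset f(p)$, and by hypothesis $(2)$ the last condition is equivalent to $q \in f[\downset p]$, which unwinds to the existence of some $p' \leq p$ with $f(p') = q$, i.e., $p \in \upset f^{-1}(q)$.

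For $(3) \Rightarrow (1)$, note first that $(3)$ directly exhibits $f^{-1}[\upset q]$ as an upset, so the preimage of every upset is an upset, equivalently the preimage of every downset is a downset, and $f$ is continuous. To check openness, let $U \subseteq P$ be a downset and let $q \in f[U]$ with $q = f(p)$, $p \in U$; given $q' \leq q$, apply $(3)$ to find $p' \leq p$ with $f(p') = q'$, and observe $p' \in U$ since $U$ is a downset, so $q' \in f[U]$. Hence $f[U]$ is a downset.

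No serious obstacle is expected: the argument is essentially an unwrapping of the definitions, and the only mildly subtle point is to isolate, in each implication, which containment needs continuity (monotonicity) and which needs openness.
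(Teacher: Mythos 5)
Your argument is correct, but it takes a different route from the paper. You prove the cycle $(1)\Rightarrow(2)\Rightarrow(3)\Rightarrow(1)$ directly from the definitions: monotonicity from continuity, openness of $f[\downset p]$ for the inclusion $\downset f(p)\subseteq f[\downset p]$, a pointwise rewriting for $(2)\Rightarrow(3)$, and a fibre-chasing argument for $(3)\Rightarrow(1)$ that establishes both continuity and openness of images. The paper instead proves $(1)\Leftrightarrow(3)$ by citing the Rasiowa--Sikorski characterization of continuous open maps via closures ($f^{-1}[\upset S]=\upset f^{-1}[S]$ for all $S\subseteq Q$, since closure in these Alexandrov spaces is $\upset$), reducing to principal upsets because $\upset$ commutes with unions, and then cites Esakia for $(2)\Leftrightarrow(3)$. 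Your approach buys a self-contained, elementary proof needing no outside references; the paper's buys brevity and situates the lemma within standard results. One small point in your $(3)\Rightarrow(1)$ step: passing from ``preimages of principal upsets are upsets'' to ``preimages of all upsets are upsets'' uses that every upset is a union of principal upsets and that preimages commute with unions --- exactly the observation the paper makes explicitly --- so it is worth one sentence, but it is not a gap.
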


\begin{proof}
Since closure in Alexandrov spaces is $\upset$, (1) is equivalent to $f^{-1}[\upset S]= \upset f^{-1}[S]$ for all $S \subseteq Q$ by \cite[pp.~98--100]{RS1963}.
Since $\upset$ commutes with unions, the latter condition is equivalent to (3). Thus, (1) is equivalent to (3). The equivalence between (2) and (3) follows from \cite[Prop.~1.4.12]{Esakia2019} (where the author works with the topologies of upsets rather than downsets, so the order in the statement is reversed everywhere).
\end{proof}

For the following lemma, we recall that a \emph{chain} of a partially ordered class $P$ is a set $x \subseteq P$ that satisfies $p_1 \leq p_2$ or $p_2 \leq p_1$ for all $p_1, p_2 \in x$.

\begin{lemma} \label{injective lemma}
Let $M$ be a convex subset of $V$, and let $P$ be a preordered set. Assume that $M \cap \downset m$ is a chain for each $m \in M$.
For all ordinals $\alpha$ and all open maps $f\: S_\alpha(M) \to P$, if $f$ is injective on $M$, then $f$ is injective on $S_\alpha(M)$.
\end{lemma}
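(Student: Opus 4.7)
The plan is a transfinite induction on $\alpha$. The base case $\alpha=0$ is immediate since $S_0(M)=M$, and in the limit case any two elements $x,y \in S_\alpha(M)$ lie jointly in some $S_\gamma(M)$ with $\gamma<\alpha$, which is a downset of $S_\alpha(M)$ by \cref{lem: auxiliary 1}; hence the restriction $f|_{S_\gamma(M)}$ is still open, and the induction hypothesis applies. Before turning to the successor step, I would record, by a separate transfinite induction on the $S$-hierarchy rank, the following preparatory claim: every $u \in S(M) \setminus M$ has $\downset u \cap M$ containing two incomparable elements of $M$. (At successor rank, $u$ itself contains two incomparable elements; either both lie in $M$ or one, say $v$, is of strictly smaller rank outside $M$, so the claim for $v$ supplies an incomparable pair inside $\downset v \cap M \subseteq \downset u \cap M$.)

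The main work lies in the successor step $\alpha=\beta+1$. I would apply the induction hypothesis to $f|_{S_\beta(M)}$ (which is open since $S_\beta(M)$ is a downset of $S_{\beta+1}(M)$ by \cref{lem: auxiliary 1}, and injective on $M$) to conclude that $f|_{S_\beta(M)}$ is injective. Then assume for contradiction that $f(x)=f(y)$ with $x \neq y$ in $S_{\beta+1}(M)$; without loss of generality $x \in S_{\beta+1}(M) \setminus S_\beta(M)$, so $x = \{z_1,z_2,\ldots\}$ is a nontrivial antichain of $S_\beta(M)$ with $z_1 \neq z_2$ incomparable. By \cref{lem: open maps}(2), $\downset x = \{x\} \cup \bigcup_i \downset z_i$ and $\downset f(x) = \{f(x)\} \cup \bigcup_i \downset f(z_i)$. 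The key technical step is to combine openness of $f$ at $y$ (to decompose $\downset f(y) = f[\downset y]$), openness at each $z_i$ (to lift preimages of elements of $\downset f(z_i)$ back into $\downset z_i$), and injectivity on $S_\beta(M)$, concluding that every $w \in \downset y \cap S_\beta(M)$ satisfies either $w \leq z_i$ for some $i$ or $f(w)=f(x)$ with $w$ uniquely determined.

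I would then split into three subcases. If $y \in S_\beta(M)$, the analysis gives $\downset y = \{y\} \cup \bigcup_i \downset z_i$, from which $z_1, z_2 < y$ strictly (else the antichain property fails). If additionally $y \in M$, the incomparable pair $z_1, z_2 \in \downset y$ combined with the preparatory claim produces two incomparable elements in $M \cap \downset y$, contradicting the chain hypothesis on $M$. If instead $y \in S_\beta(M) \setminus M$, then $y = \{w_1,w_2,\ldots\}$ is itself a nontrivial antichain, so $\downset y = \{y\} \cup \bigcup_j \downset w_j$ as well; subtracting $\{y\}$ (which lies in neither union since $y \not\leq w_j$ and $y \not\leq z_i$) yields $\bigcup_j \downset w_j = \bigcup_i \downset z_i$, and comparing maximal elements of each side forces $\{w_j\} = \{z_i\}$, i.e., $y=x$ as elements of $V$, contradicting $y \in S_\beta(M)$. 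In the final subcase $y \in S_{\beta+1}(M) \setminus S_\beta(M)$, the same structural analysis either produces a unique $y^* \in S_\beta(M)$ with $f(y^*)=f(x)$, reducing to the previous case applied to the pair $(x, y^*)$, or again yields $\bigcup_j \downset w_j = \bigcup_i \downset z_i$ and hence $y=x$. The main obstacle I anticipate is the careful bookkeeping needed to establish the structural equality $\downset y = \{y\} \cup \bigcup_i \downset z_i$ and to handle the auxiliary element $y^*$ in the last subcase, since the argument must simultaneously juggle openness of $f$ at several distinct points and apply the induction hypothesis at just the right level of the hierarchy.
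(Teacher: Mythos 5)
Your proposal is correct and takes essentially the same route as the paper's proof: a transfinite induction whose successor step uses $f[\downset x]=\downset f(x)=\downset f(y)=f[\downset y]$ together with injectivity on $S_\beta(M)$ to match the strict downsets of $x$ and $y$, recovering each as the set of maximal elements of its strict downset. The differences are bookkeeping: your preparatory claim (an incomparable pair of elements of $M$ below every hereditary antichain) makes explicit the step, handled tersely in the paper, that $y\in M$ would contradict the chain hypothesis, and your $y^{*}$-dichotomy replays the paper's argument that $f(x)=f(y)$ cannot hold with exactly one of $x,y$ in $S_\beta(M)$.
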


\begin{proof}
We prove the theorem by transfinite induction on $\alpha$. For brevity, let $T_\alpha = S_\alpha(M)$. In particular, $T_0 = M$. Assume that for all $\beta < \alpha$ and all open maps $g\: T_\beta \to P$, if $g$ is injective on $T_0$, then $g$ is injective on $T_\beta$. If $\alpha$ is zero, then the conclusion of the theorem is tautological. If $\alpha$ is limit, then the conclusion of the theorem follows immediately because $T_\alpha = \bigcup\{T_\beta: \beta < \alpha\}$ and $T_\beta$ is a downset of $T_\alpha$ for all $\beta < \alpha$. The set $T_\beta$ is a downset of $T_\alpha$ by \cref{lem: auxiliary 1} because $M$ is convex. Only the successor case remains.

Assume that $\alpha$ is a successor ordinal. Let $f\: T_\alpha \to P$ be an open map, and assume that it is injective on $T_0$. By the induction hypothesis, $f$ is injective on $T_{\alpha -1}$. Let $x,y \in T_\alpha$ be such that $f(x) = f(y)$. Because $f$ is an open map, we have that 
\begin{equation}\label{eq: open}\tag{$\ast$}
f[\downset x]=\downset f(x) = \downset f(y) = f[\downset y]
\end{equation}
by \cref{lem: open maps}.
We observe that $\downset x \setminus \{x\}$ is the intersection of $S(M)$ and the transitive closure of $x$, and it is likewise for $\downset y \setminus \{y\}$.

Suppose that exactly one of $x$ or $y$ is in $T_{\alpha -1}$. Without loss of generality, $x \not \in T_{\alpha -1}$ and $y \in T_{\alpha -1}$. We apply (\ref{eq: open}). For all $x' \in \downset x\setminus \{x\}$, there exists an element $y' \in \downset y$ such that $f(x') = f(y')$ and, hence, $x' = y'$ because $x', y' \in T_{\alpha-1}$ and $f$ is injective on $T_{\alpha-1}$. It follows that $\downset x \setminus \{x\} \subseteq \downset y$, and in particular, $x \subsetof \downset y$. We find that $y \not \in M$ because $\downset y$ is a chain for all $y \in M$. If $y \not \in x$, then $x \subseteq \downset y \setminus \{y\} \subseteq T_\beta$ for some $\beta < \alpha -1$, so $x \in T_{\beta +1} \subseteq T_{\alpha -1}$, contradicting our assumption that $x \not \in T_{\alpha-1}$. If $y \in x$, then $x$ is not a nontrivial antichain of $T_{\alpha-1}$, contradicting that $x \in T_\alpha \setminus T_{\alpha-1}$. We conclude that, for all $x, y \in T_\alpha$ such that $f(x) = f(y)$, either $x , y \in T_{\alpha -1}$ or $x, y \not \in T_{\alpha -1}$.

If $x, y \in T_{\alpha -1}$, then $x = y$ because $f$ is injective on $T_{\alpha -1}$. Assume that $x, y \not \in T_{\alpha -1}$. We apply (\ref{eq: open}). For all $x' \in \downset x \setminus \{x\}$, there exists an element $y' \in \downset y$ such that $f(x') = f(y')$. Since $x' \in T_{\alpha-1}$, this equality implies that $y' \in T_{\alpha -1}$ by the conclusion of the preceding paragraph. 
It follows that $x' = y'$ because $f$ is an injection on $T_{\alpha-1}$. Thus, $x' = y' \in \downset y \setminus \{y\}$. Overall, we conclude that $\downset x \setminus \{x\} \subseteq \downset y \setminus \{y\}$, and symmetrically, $\downset y \setminus \{y\} \subseteq \downset x \setminus \{x\}$. Thus, $\downset x \setminus \{x\} = \downset y \setminus \{y\}$. This implies that $x = y$ because $x$ is the set of maximal elements of $\downset x \setminus \{x\}$, and it is likewise for $y$. Indeed, $x \subseteq T_{\alpha -1}$, and each element of $\downset x \setminus \{x\}$ is either an element of $x$ or strictly below an element of $x$. Therefore, $x = y$ whether $x, y \in T_{\alpha -1}$ or $x, y \not \in T_{\alpha -1}$, and the other cases are impossible.

We have proved that the induction hypothesis implies the conclusion of the theorem when $\alpha$ is a successor ordinal. Therefore, by transfinite induction on $\alpha$, every open map $f\: T_\alpha \to P$ is an injection if it is an injection on $T_0$.
\end{proof}

\begin{theorem} \label{thm: PreOpen does not have products}
The following categories do not have binary products:
\begin{enumerate}[label=\normalfont(\arabic*), ref = \thelemma(\arabic*)]
\item $\cat{PreOpen}$,
\item $\cat{PosOpen}$,
\item $\cat{WellOpen}$.
\end{enumerate}
\end{theorem}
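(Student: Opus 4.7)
The plan is to argue by contradiction in each of the three categories: assume that a product $X = \SS \times \SS$ exists with projections $\pi_1, \pi_2 \: X \to \SS$, and exhibit for each ordinal $\alpha$ an open injection $S_\alpha(M) \hookrightarrow X$ for a single fixed convex set $M \subseteq V$. Combined with the fact that $S(M)$ is a proper class, this forces $|X|$ to be a proper class, contradicting that $X$ is a set.

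The crucial step is the choice of $M$. A pure antichain $M \subseteq V$ will not do, because every element of such an $M$ is minimal in $T_\alpha := S_\alpha(M)$, openness of a map to $\SS$ forces minimal elements to map to $0$ (the minimum of $\SS$), and so no pair of open maps can separate $M$ inside $\SS \times \SS$. I therefore take $M$ to consist of a unique minimum $d$ together with a three-element antichain $\{a, b, c\}$ above it. Concretely, set $d = V_\omega$, choose three distinct $x, y, z \in V_{\omega+1} \setminus V_\omega$, let $a = \{d, x\}$, $b = \{d, y\}$, $c = \{d, z\}$, and put $M = \{d, a, b, c\}$. A rank computation---every element of the transitive closure of $a$, $b$, or $c$ distinct from $d$ itself has rank strictly less than $\omega$ and hence lies strictly below $d$---verifies that $M$ is convex in $V$, that $a, b, c$ are pairwise incomparable, and that $M \cap \downset m$ is a chain for each $m \in M$; in particular \cref{injective lemma} applies. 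Since $\{a, b, c\} \subseteq S_0(M)$ is itself a convex three-element antichain of $V$, \cref{lem: auxiliary 4} together with \cref{cardinality theorem} imply that $S(M)$ is a proper class, and hence that $|T_\alpha|$ is unbounded as $\alpha$ varies.

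Now fix an arbitrary ordinal $\alpha$. The element $d$ remains the unique minimum of $T_\alpha$ by a short induction: any element newly added at a successor stage is a nontrivial antichain whose members are each strictly above $d$. The subsets $D_1 = \{d, a\}$ and $D_2 = \{d, b\}$ are downsets of $T_\alpha$ (no other element of $T_\alpha$ lies weakly below $a$ or $b$), and both contain $d$. Using the characterization of open maps to $\SS$ provided by \cref{lem: open maps}---every principal downset of the source must meet the preimage of $0$---the resulting indicator functions $f_1, f_2 \: T_\alpha \to \SS$ with $f_i^{-1}(0) = D_i$ are therefore open. By direct inspection, the pair $(f_1, f_2)$ sends $d, a, b, c$ to the four distinct points of $\SS \times \SS$ as a set. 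The universal property of the product produces an open map $g \: T_\alpha \to X$ with $\pi_i \circ g = f_i$; because distinct points of a categorical product are separated by the pair of projections (apply the universal property to morphisms from the terminal object), $g$ is injective on $M$, and so by \cref{injective lemma} $g$ is injective on all of $T_\alpha$. Hence $|T_\alpha| \le |X|$, and letting $\alpha$ vary yields the desired contradiction.

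The argument proves the theorem uniformly in all three categories because $\SS$ and each $T_\alpha \subseteq V$ are well-founded posets, hence objects of each of $\cat{PreOpen}$, $\cat{PosOpen}$, and $\cat{WellOpen}$. The main technical obstacle is the simultaneous imposition of convexity in $V$ and of the chain condition on $M \cap \downset m$ required by \cref{injective lemma}; the $V_\omega$-based construction of $M$ is engineered precisely to resolve this tension, by making $d$ sufficiently ``thick'' that no element of $V$ sits strictly between $d$ and any of $a, b, c$.
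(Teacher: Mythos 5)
Your proof is correct and follows essentially the same route as the paper's: the paper also takes $M$ to be a least element below a three-element antichain (namely $m_0=\{\{0\}\}$ and $m_i=\{m_0,i\}$ for $i=1,2,3$), uses the same two indicator maps to $\SS$, and combines \cref{injective lemma} with \cref{cardinality theorem} in exactly the same way; your $V_\omega$-based choice of $M$ is just a different concrete realization of the same configuration. Two small repairs are needed in your write-up, though neither is fatal. First, your rank claim is false for $x$, $y$, $z$ themselves: they have rank exactly $\omega$, the same as $d=V_\omega$, so they are incomparable with $d$ rather than strictly below it. Convexity and the chain condition still hold, but for a slightly different reason: every element of the transitive closure of $a$, $b$, or $c$ other than $d$ has rank at most $\omega$ and is distinct from $d$, hence cannot lie strictly above $d$, so nothing sits strictly between $d$ and $a$, $b$, or $c$. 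Second, the parenthetical claim that distinct points of a categorical product are separated by the pair of projections is false in $\cat{PreOpen}$ --- morphisms from the terminal object correspond only to minimal points, since the image of the one-point open set must be open --- and if the claim were true it would trivialize the theorem by forcing $|X|\le 4$. Fortunately you do not need it: injectivity of $g$ on $M$ is immediate from $\pi_i\circ g=f_i$ together with the joint injectivity of $(f_1,f_2)$ on $M$, which is how the paper argues.
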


\begin{proof}
Suppose that $\cat{PreOpen}$ has binary products, and let $\SS = \{0,1\}$ be ordered by $0 < 1$. Let the preordered set $P$, together with open maps $p_1,p_2\: P \to \SS$, be a product of $\SS$ with itself.
Let $0, 1, 2, \ldots$ be the finite ordinals. Let 
\begin{enumerate}
\item $m_0 = \{\{0\}\}$,
\item $m_1 = \{m_0, 1\}$,
\item $m_2 = \{m_0, 2\}$,
\item $m_3 = \{m_0, 3\}$,
\end{enumerate}
and let $M = \{m_0, m_1, m_2, m_3\}$. We observe that $m_0 < m_1, m_2, m_3$ and that $\{m_1, m_2, m_3\}$ is an antichain because $m_0$ is not in the transitive closure of any ordinal. For the same reason, we have that $M$ is a convex set, and it is obvious that $M \cap \downset m$ is a chain for each $m \in M$.

Let $\alpha$ be an ordinal. An initial segment of the poset $S_\alpha(M)$ is depicted in \cref{figure} when $\alpha$ is a limit ordinal.

\begin{figure}[h]
\footnotesize
$$
\begin{tikzcd}
\vdots
&
\vdots
&
\text{\reflectbox{$\ddots$}}
&
\text{\reflectbox{$\ddots$}}
&
\text{\reflectbox{$\ddots$}}
\\
\{\{m_1,m_2\},\{m_1,m_3\}\}
\arrow[dash]{d}
\arrow[dash]{dr}
&
\{\{m_1,m_2\},m_3\}
\arrow[dash]{dl}
\arrow[dash]{ddr}
&
\cdots
&
\cdots
&
\cdots
\\
\{m_1,m_2\}
\arrow[dash]{d}
\arrow[dash]{dr}
&
\{m_1,m_3\}
\arrow[dash]{dl}
\arrow[dash]{dr}
&
\{m_2,m_3\}
\arrow[dash]{d}
\arrow[dash]{dl}
&
\{m_1,m_2,m_3\}
\arrow[dash]{dl}
\arrow[dash]{dll}
\arrow[dash]{dlll}
\\
m_1
\arrow[dash]{d}
&
m_2
\arrow[dash]{dl}
&
m_3
\arrow[dash]{dll}
\\
m_0
&
&
&
\end{tikzcd}
$$
\normalsize
\caption{The poset $S_\alpha(\{m_0,m_1,m_2, m_3\})$.}\label{figure}
\end{figure}
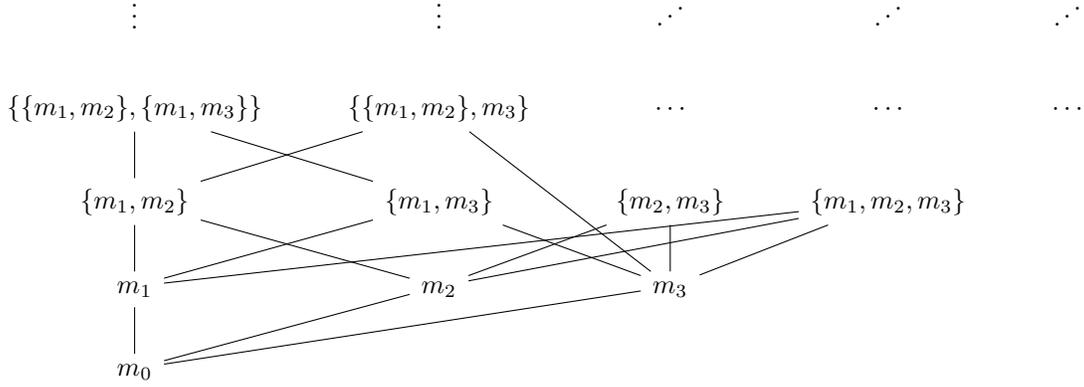

\newpage

\noindent For each $i \in\{1,2\}$, let $f_i\: S_\alpha(M) \to \SS$ be defined by
$$
f_i(x) = 
\begin{cases}
0 & \text{if }x = m_0, \\
0 & \text{if }x = m_i, \\
1 & \text{otherwise}.
\end{cases}
$$
The function $f_i$ is an open map by \cref{lem: open maps} because $f_i[\downset x] = \{0\} = \downset f_i(x)$
for $x \in \{m_0, m_i\}$ and $f_i[\downset x] = \{0,1\} = \downset f_i(x)$ for all other $x \in S_\alpha(M)$.
The universal property of the product yields an open map $f\:S_\alpha(M) \to P$ that makes the following diagram commute:
$$
\begin{tikzcd}
&
S_\alpha(M)
\arrow{ld}[swap]{f_1}
\arrow{rd}{f_2}
\arrow[dotted]{d}{f}
&
\\
\SS
&
P
\arrow{l}{p_1}
\arrow{r}[swap]{p_2}
&
\SS
\end{tikzcd}
$$

The function $m \mapsto (f_1(m), f_2(m))= (p_1(f(m)),p_2(f(m)))$ is an injection $M \to \SS \times \SS$, where the codomain is the usual product of posets. It follows that $f$ is injective on $M$ too. By \cref{injective lemma}, we conclude that $f$ is injective on $S_\alpha(M)$. Thus, the cardinality of $P$ is no smaller than the cardinality of $S_\alpha(M)$ for all ordinals $\alpha$, contradicting \cref{cardinality theorem} because $S_\alpha(\{m_1,m_2,m_3\}) \subseteq S_\alpha(M)$. Therefore, $\cat{PreOpen}$ does not have binary products.

We have proved claim (1). The same proof establishes claims (2) and (3) because $S_\alpha(M)$ is a well-founded poset for each ordinal $\alpha$.
\end{proof}

\begin{remark}
The construction in \cref{figure} is reminiscent of the coloring technique in modal logic (see, e.g., \cite{CZ1997,Nic06}). 
\end{remark}

We now prove that the category $\cat{TopOpen}$ of topological spaces and open maps, the full subcategory $\cat{TopOpen}_0$ of $T_0$-spaces, and the full subcategory $\cat{SobOpen}$ of sober spaces do not have binary products. A topological space is said to be \emph{sober} if each nonempty closed set that is not the union of two proper closed subsets is the closure of a unique point (see, e.g., \cite[p.~2]{PP2012}).
A poset is sober iff it is well-founded (see, e.g., \cite[Thm.~4.12]{EGP2007}).
Sobriety is a separation axiom that is natural to several areas of mathematics. For example, the Zariski spectrum of a commutative ring is sober but generally not Hausdorff \cite{Hoc1969}.

We apply \cref{thm: PreOpen does not have products} by using the familiar fact that right adjoints preserve products. Consequently, if a category has all binary products then its coreflective subcategories also have binary products.

\begin{proposition}\label{prop: coreflective}
We have that
\begin{enumerate}[label=\normalfont(\arabic*), ref = \thelemma(\arabic*)]
\item $\cat{PreOpen}$ is a coreflective subcategory of $\cat{TopOpen}$,
\item $\cat{PosOpen}$ is a coreflective subcategory of $\cat{TopOpen}_0$,
\item $\cat{WellOpen}$ is a coreflective subcategory of $\cat{SobOpen}$.
\end{enumerate}
\end{proposition}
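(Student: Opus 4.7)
For each claim, I would construct the right adjoint explicitly. Call an open subset $A \subseteq X$ an \emph{Alexandrov open subspace} if the subspace topology on $A$ is Alexandrov, and let $\mathcal{A}(X)$ denote the collection of all such subspaces. The candidate right adjoint sends $X$ to $R(X) := \bigcup \mathcal{A}(X)$ with counit $\epsilon_X\: R(X) \hookrightarrow X$ the inclusion.

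First, I would verify that $\mathcal{A}(X)$ is closed under arbitrary unions, so that $R(X) \in \mathcal{A}(X)$. If $\{A_j\} \subseteq \mathcal{A}(X)$ and $\{U_i\}$ is a family of opens of $X$ contained in $\bigcup_j A_j$, then
\[
\bigcap_i U_i = \bigcup_j \bigcap_i (U_i \cap A_j),
\]
and each inner intersection is open in $X$ by the Alexandrov property of $A_j$, making $\bigcap_i U_i$ open as well. This shows $\bigcup_j A_j$ is Alexandrov in the subspace topology, and it is open in $X$ as a union of opens. Consequently $\epsilon_X$ is continuous and open (subspace-opens of the open set $R(X)$ are opens of $X$), hence a morphism of $\cat{TopOpen}$.

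Next, I would verify the universal property via the key lemma that the image of any open map $f\: P \to X$ with $P$ preordered lies in $R(X)$. Openness of $f$ forces $f(P)$ to be open in $X$. For each $p \in P$, continuity of $f$ implies $f(\downset p) \subseteq V$ whenever $V$ is open in $X$ with $f(p) \in V$ (because $f^{-1}(V)$ is a downset containing $p$), while openness makes $f(\downset p)$ open in $X$; hence $f(\downset p)$ is the smallest open of $X$ containing $f(p)$. These smallest neighborhoods then make $f(P)$ Alexandrov in subspace: any subspace-open containing $y = f(p)$ contains $f(\downset p)$, so arbitrary intersections of subspace-opens decompose as unions of such $f(\downset p)$'s. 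Therefore $f(P) \in \mathcal{A}(X)$, so $f(P) \subseteq R(X)$, and $f$ corestricts to a unique open map $\bar f\: P \to R(X)$ in $\cat{PreOpen}$, yielding (1).

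For (2) and (3), the same construction works. Subspaces of $T_0$-spaces are $T_0$, so $R(X)$ is a poset when $X \in \cat{TopOpen}_0$. Open subspaces of sober spaces are sober (by lifting generic points from their closures in $X$), and sober Alexandrov spaces are exactly well-founded posets as noted just before the proposition; hence $R(X) \in \cat{WellOpen}$ whenever $X \in \cat{SobOpen}$. The main obstacle is the key lemma: one must promote the local ``smallest open neighborhood'' property at each image point into the global Alexandrov property of the image subspace via a careful subspace computation, and then use this to force every open map from a preordered set to factor through the single maximal Alexandrov open subspace.
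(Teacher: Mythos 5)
Your proposal is correct and takes essentially the same route as the paper: the coreflector is the largest open subspace that is Alexandrov in the subspace topology, and the key point is that an open map $f\: P \to X$ from a preordered set has open Alexandrov image because $f[\downset p]$ is the least open neighborhood of $f(p)$, so $f$ corestricts uniquely. Claims (2) and (3) are handled exactly as in the paper, using that subspaces of $T_0$-spaces are $T_0$ and open subspaces of sober spaces are sober (with sober Alexandrov spaces being the well-founded posets).
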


\begin{proof}
Let $X$ be a topological space, and let $\A$ be the family of all open subsets that are Alexandrov in the subspace topology. The open set $\widehat X = \bigcup \A$ is also Alexandrov in the subspace topology because a space is Alexandrov iff each point has a least open neighborhood (see, e.g., \cite[sec.~II.1.8]{Joh1982}). Thus, $\widehat X$ is the largest open subset of $X$ that is Alexandrov in the subspace topology.

Let $P$ be a preordered set, and let $f\: P \to X$ be an open map. Then, the range of $f$ is open. 
It is also Alexandrov because $f[U_x]$ is the least open neighborhood of $f(x)$ whenever $U_x$ is the least open neighborhood of $x \in X$. Thus, $f[P]\in \A$, and $f$ factors uniquely through the inclusion $\widehat X \hookrightarrow X$:
$$
\begin{tikzcd}
P
\arrow[dotted]{d}[swap]{!}
\arrow{rd}{f}
&
\\
\widehat X
\arrow[hook]{r}
&
X
\end{tikzcd}
$$
We conclude that $\cat{PreOpen}$ is a coreflective subcategory of $\cat{TopOpen}$. 

We have proved claim (1). The same proof establishes claims (2) and (3). Indeed, each subspace of a $T_0$-space is $T_0$, and each open subspace of a sober space is sober (see, e.g, \cite[O-5.16(2)]{GHKLMS2003}).  
\end{proof}

\begin{remark}
    The category $\cat{Pre}$ of preordered sets and order-preserving maps is a coreflective subcategory of $\cat{Top}$, but the coreflector in the proof of \cref{prop: coreflective} is different. Indeed, the coreflector $\cat{Top}\to\cat{Pre}$ maps each topological space $X$ to the preordered set $(X,\le)$, where $\le$ is the dual of the specialization order. In other words, $x\le y$ iff $y\in\overline{\{x\}}$ (we work with the dual of the specialization order because of our earlier convention that downsets are open). 
    However, the Alexandrov topology of this preorder may be strictly finer than the topology of $X$, and hence the inclusion map may fail to be open. 
\end{remark}

We now complete the proof of our main result.

\begin{theorem}\label{thm: TopOpen does not have products}
The following categories do not have binary products:
\begin{enumerate}[label=\normalfont(\arabic*), ref = \thelemma(\arabic*)]
\item $\cat{TopOpen}$,
\item $\cat{TopOpen}_0$,
\item $\cat{SobOpen}$.
\end{enumerate}
\end{theorem}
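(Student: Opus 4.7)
The plan is to argue each of the three claims by contradiction, reducing them respectively to parts (1), (2), (3) of \cref{thm: PreOpen does not have products}. The categorical bridge is already in place: \cref{prop: coreflective} exhibits $\cat{PreOpen}$, $\cat{PosOpen}$, $\cat{WellOpen}$ as coreflective in $\cat{TopOpen}$, $\cat{TopOpen}_0$, $\cat{SobOpen}$, and the paragraph preceding \cref{prop: coreflective} records that a coreflective subcategory inherits all binary products of the ambient category via the right adjoint.

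For claim (1), I would suppose for contradiction that $\cat{TopOpen}$ has binary products. Given preordered sets $P$ and $Q$, viewed as Alexandrov topological spaces, I would form the product $P \times Q$ in $\cat{TopOpen}$ with projections $\pi_P, \pi_Q$, and then apply the coreflector $\widehat{(-)}$ of \cref{prop: coreflective} to obtain the preordered set $\widehat{P \times Q}$ together with the open maps $\widehat{\pi_P}\: \widehat{P \times Q} \to P$ and $\widehat{\pi_Q}\: \widehat{P \times Q} \to Q$ (using that $\widehat{P} = P$ and $\widehat{Q} = Q$). The universal property in $\cat{PreOpen}$ is immediate: any pair of open maps $R \to P$ and $R \to Q$ from a preordered set $R$ induces a unique open map $R \to P \times Q$ in $\cat{TopOpen}$ by the product universal property, and that map factors uniquely through the coreflection $\widehat{P \times Q} \hookrightarrow P \times Q$. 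Thus $\widehat{P \times Q}$ is a binary product of $P$ and $Q$ in $\cat{PreOpen}$, contradicting \cref{thm: PreOpen does not have products}(1).

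Claims (2) and (3) go through verbatim: replace $\cat{TopOpen}$, $\cat{PreOpen}$ by $\cat{TopOpen}_0$, $\cat{PosOpen}$ for (2), and by $\cat{SobOpen}$, $\cat{WellOpen}$ for (3), invoking parts (2) and (3) of both \cref{prop: coreflective} and \cref{thm: PreOpen does not have products}.

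I do not anticipate any real obstacle here, since all of the substantive content has been carried by \cref{cardinality theorem}, \cref{injective lemma}, \cref{thm: PreOpen does not have products}, and \cref{prop: coreflective}. The only point worth a line of verification is that $\widehat{\pi_P}$ and $\widehat{\pi_Q}$ are genuinely open maps of preordered sets — but this is automatic, either by functoriality of the coreflector or because $\widehat{P \times Q}$ is itself open in $P \times Q$ and the restriction of an open map to an open subspace remains open.
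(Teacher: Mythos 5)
Your proposal is correct and follows essentially the same route as the paper: reduce each claim to the corresponding part of \cref{thm: PreOpen does not have products} via the coreflections of \cref{prop: coreflective}, using that coreflective subcategories inherit binary products (right adjoints preserve products). The only difference is that you unwind this standard categorical fact explicitly (coreflecting the ambient product and checking the universal property), whereas the paper simply cites it.
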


\begin{proof}
Assume that $\cat{TopOpen}$ has binary products. The category $\cat{PreOpen}$ is a coreflective subcategory of $\cat{TopOpen}$ by \cref{prop: coreflective}, and thus, it too has binary products. This conclusion contradicts \cref{thm: PreOpen does not have products}. Therefore, $\cat{TopOpen}$ does not have binary products.

We have proved claim (1). The same proof establishes claims (2) and (3), with $\cat{PosOpen}$ in place of $\cat{PreOpen}$ in the case of claim (2) and $\cat{WellOpen}$ in place of $\cat{PreOpen}$ in the case of claim (3).
\end{proof}

\section{Coproducts do not exist in $\cat{CHA}$}

A \emph{Heyting algebra} is a bounded lattice $A$ such that $\wedge$ has a \emph{residual} $\to$ in the sense that 
\[
a\wedge x \le b \quad \Longleftrightarrow \quad x \le a \to b
\]
for all $a,b,x \in A$ \cite[p.~174]{BP74}. A \emph{complete Heyting algebra} is a Heyting algebra that is complete as a lattice. It is well known that a complete lattice is a Heyting algebra iff it satisfies the join-infinite distributive law \cite[p.~332]{PP2012}. Such complete lattices are known as \emph{frames} and have been studied extensively in pointfree topology \cite{Joh1982,PP2012}.  

A \emph{morphism} of complete Heyting algebras is a map $\varphi: A \to B$ that is a homomorphism of complete lattices such that $\varphi(a_1 \to a_2) = \varphi(a_1) \to \varphi(a_2)$ for all $a_1, a_2 \in A$. We prove that the resulting category $\cat{CHA}$ does not have coproducts. This also follows from a result of De Jongh \cite{deJ80}. However, our proof also shows that even finite Heyting algebras need not have a coproduct in this category. 

We begin by observing that $\cat{PosOpen}^{\mathrm{op}}$ is equivalent to a reflective subcategory of $\cat{CHA}$. For this we utilize De Jongh-Troelstra duality \cite{DT1966}. Let $\O\: \cat{PreOpen}^{\mathrm{op}} \to \cat{CHA}$ be the functor that maps each poset $P$ to the complete Heyting algebra $\O(P)$ and each open map $f\: P \to Q$ to the morphism $f^{-1}[-]\: \mathcal O (Q) \to \mathcal O (P)$. We use the same notation for the restriction of $\O$ to the full subcategories $\cat{PosOpen}^{\mathrm{op}}$ and $\cat{WellOpen}^{\mathrm{op}}$.

\begin{proposition}\label{prop: De Jongh Troelstra}
The functor $\mathcal O\:\cat{PosOpen}^{\mathrm{op}} \to \cat{CHA}$ is full, faithful, and has a left adjoint $\mathcal L\:\cat{CHA} \to \cat{PosOpen}^{\mathrm{op}}$. In particular, $\L(\O(P)) \cong P$ for each poset $P$.
\end{proposition}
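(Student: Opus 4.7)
My plan is to prove the proposition by explicit construction, taking $\mathcal{L}(A)$ to be the poset $J(A)$ of completely join-prime elements of $A$ with the order inherited from $A$. The isomorphism $\mathcal{L}(\mathcal{O}(P)) \cong P$ is then immediate: the completely join-prime elements of $\mathcal{O}(P)$ are exactly the principal downsets, and $p \mapsto \downarrow p$ is an order isomorphism $P \to J(\mathcal{O}(P))$. Faithfulness of $\mathcal{O}$ is routine: if $f, g \colon P \to Q$ are open maps with $f^{-1} = g^{-1}$, then for each $p \in P$, plugging $q = f(p)$ into $f^{-1}(\downarrow q) = g^{-1}(\downarrow q)$ gives $g(p) \leq f(p)$, and by symmetry $f = g$.

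For fullness, given a CHA morphism $\varphi \colon \mathcal{O}(Q) \to \mathcal{O}(P)$, I use that $\varphi$ preserves arbitrary meets to form its left adjoint $\varphi^* \colon \mathcal{O}(P) \to \mathcal{O}(Q)$, and that $\varphi$ preserves arbitrary joins to conclude that $\varphi^*$ carries completely join-primes to completely join-primes (a one-line argument via $\varphi^* \dashv \varphi$ and the defining property of a completely join-prime). Since the completely join-primes of $\mathcal{O}(Q)$ are the principal downsets, I define $f \colon P \to Q$ by the equation $\varphi^*(\downarrow p) = \downarrow f(p)$; monotonicity of $f$ is inherited from $\varphi^*$, and the identity $\varphi = f^{-1}$ follows from the chain $p \in \varphi(U) \iff \downarrow p \subseteq \varphi(U) \iff \varphi^*(\downarrow p) \subseteq U \iff f(p) \in U$.

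The main obstacle is to show that the $f$ so defined is an open map, i.e.\ $f[\downarrow p] = \downarrow f(p)$ as required by \cref{lem: open maps}. The decisive tool is the Frobenius reciprocity identity
\[
\varphi^*(U \cap \varphi(V)) = V \cap \varphi^*(U),
\]
obtained by chaining the adjunction $\varphi^* \dashv \varphi$ with the residuations $(- \wedge c) \dashv (c \to -)$ on each side; it is precisely here that preservation of Heyting implication by $\varphi$ is used. Given $q \leq f(p)$, applying Frobenius with $U = \downarrow p$ and $V = \downarrow q$ yields $\varphi^*(\downarrow p \cap \varphi(\downarrow q)) = \downarrow q$. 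Since $\downarrow p \cap \varphi(\downarrow q) = \bigcup\{\downarrow p' : p' \leq p,\, f(p') \leq q\}$ and $\varphi^*$ preserves joins, the completely join-prime property of $\downarrow q$ forces $\varphi^*(\downarrow p') = \downarrow q$ for some such $p'$, so that $f(p') = q$. Once $\mathcal{O}$ is known to be fully faithful, the left adjoint $\mathcal{L}$ is exhibited by the same recipe on morphisms, $\mathcal{L}(\varphi) = \varphi^*|_{J(B)}$ viewed as a morphism in $\cat{PosOpen}^{\mathrm{op}}$, and the required hom-set bijection $\cat{PosOpen}(P, J(A)) \cong \cat{CHA}(A, \mathcal{O}(P))$ reduces to essentially the same Frobenius calculation.
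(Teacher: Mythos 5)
Your fullness and faithfulness arguments are fine, and in fact more self-contained than the paper's (which simply cites De Jongh--Troelstra duality): the passage to the left adjoint $\varphi^*$, the observation that $\varphi^*$ sends completely join-primes to completely join-primes, and the Frobenius identity $\varphi^*(U\cap\varphi(V))=V\cap\varphi^*(U)$ (which indeed encodes preservation of $\to$) do produce an open map $f$ with $\O(f)=\varphi$, and the identification of $P$ with the completely join-prime elements of $\O(P)$ is correct.

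The gap is in the existence of the left adjoint on all of $\cat{CHA}$, which is the main content of the proposition and the part actually used later ($\L$ must be applied to a hypothetical coproduct, i.e.\ to an arbitrary complete Heyting algebra). Taking $\L(A)=J(A)$, the poset of \emph{all} completely join-prime elements of $A$, is wrong: the asserted bijection $\cat{PosOpen}(Q,J(A))\cong\cat{CHA}(A,\O(Q))$ does not ``reduce to the same Frobenius calculation''---that calculation gives only an injection from right to left. The transpose of an open map $g\:Q\to J(A)$, namely $a\mapsto\{q: g(q)\le a\}$, preserves arbitrary joins and meets but need not preserve $\to$; for that one needs every element below each $g(q)$ to be a join of completely join-primes, which can fail. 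Concretely, let $X=\RR\cup\{\ast\}$ topologized so that the open sets are the usual open subsets of $\RR$ together with $X$ itself, and let $A=\O(X)$. The top of $A$ is completely join-prime (any open cover of $X$ must contain $X$), and it is the only completely join-prime element, so $J(A)$ is a one-point poset and $\cat{PosOpen}(Q,J(A))$ is a singleton for $Q$ the one-point poset. But $\cat{CHA}(A,\O(Q))$ is empty: any complete lattice homomorphism $A\to\O(Q)$ has $\varphi^{-1}(1)=\upset c$ with $c$ completely join-prime, so the only candidate is $\eta(a)=1$ iff $a=X$, and $\eta$ fails to preserve implication, since for $U=(0,1)$ we get $\eta(U\to\emptyset)=\eta\bigl((-\infty,0)\cup(1,\infty)\bigr)=0$ while $\eta(U)\to\eta(\emptyset)=0\to 0=1$. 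Hence no natural bijection can exist at this $A$, and your $\L$ is not a left adjoint. This is precisely why the paper does not take all of $J(A)$: it restricts to the completely join-irreducible elements below the largest $s\in A$ with $\downset s\cong\O(P)$ for some poset $P$ (equivalently, to those join-primes $j$ such that every $b\le j$ is again a join of join-primes), and the unit is $a\mapsto s\wedge a$ followed by the isomorphism $\downset s\cong\O(\L(A))$, which genuinely is a $\cat{CHA}$-morphism. Your argument can be repaired by cutting $J(A)$ down in this way and then verifying the universal property for that smaller poset.
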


\begin{proof}
The functor $\mathcal O$ is full and faithful by De Jongh-Troelstra duality; see \cref{lem: open maps} for equivalent characterizations of morphisms in $\cat{PosOpen}$. To show that it has a left adjoint, let $A$ be a complete Heyting algebra. Let $\S$ be the set of all $a \in A$ such that $\downset a \cong \O(P)$ for some poset $P$. By De Jongh-Troelstra duality, $a \in \S$ iff each $b \leq a$ is a join of completely join-irreducible elements. Let $s=\bigvee\S$. We show that 
$s \in \S$, which implies that $s$ is the largest element of $\S$.
Let $x\le s$. Since $A$ is a complete Heyting algebra, $A$ satisfies the join-infinite distributive law (see, e.g., \cite[Sec.~IV.7]{RS1963}), so $x=x\wedge s=\bigvee\{x \wedge y : y\in \S\}$. But each $x\wedge y$ is a join of completely join-irreducible elements because $y\in \S$. Thus, $x$ is a join of completely join-irreducible elements, and hence $s \in \S$. (In fact, $s=1$ iff $A\cong\O(P)$ for some poset $P$.)

Let $\L(A)$ be the poset of completely join-irreducible elements of $\downset s$. 
By De Jongh-Troelstra duality, there is an isomorphism $\varepsilon\:\downset s \cong \O(\L(A))$. For each $a\in A$, let $\rho_a :A \to \downset a$ be given by $\rho_a(b) = a \wedge b$, and define $\eta\: A \to \O(\L(A))$ to be the composition $\eta=\varepsilon\circ\rho_s$. 
Then $\eta$ is a composition of two morphisms of complete Heyting algebras and hence is a morphism of complete Heyting algebras. We show that $\eta$ has the desired universal property.

Let $Q$ be a poset, and let $\varphi: A \to \O(Q)$ be a morphism of complete Heyting algebras. 
Because complete homomorphic images are determined by principal filters and $A/\upset x \cong \downset x$ (see, e.g., \cite[Sec.~I.13]{RS1963}), we obtain that $\varphi$ factors through a morphism $\rho_a:\ A \to \downset a$ via an injective morphism $\iota: \downset a \to \O(Q)$. Since $\downset a$ is isomorphic to a complete subalgebra of $\O(Q)$, by De Jongh-Troelstra duality it is isomorphic to $\O(P)$ for some poset $P$, and hence $a\le s$. Thus, we obtain a morphism $\pi\: \O(\L(A)) \to \O(Q)$ such that $\pi \circ \eta = \varphi$:

$$
\begin{tikzcd}
A
\arrow{r}{\rho_s}
\arrow{rd}{\rho_a}
\arrow[bend right = 30]{rrdd}[swap]{\varphi}
\arrow[bend left = 25]{rr}{\eta}
&
\downset s
\arrow{d}{}
\arrow{r}{\varepsilon}
&
\O(\L(A))
\arrow[dotted]{dd}{\pi}
\\
&
\downset a
\arrow{rd}{\iota}
&
\\
&
&
\O(Q)
\end{tikzcd}
$$

Because $\O$ is full, there exists an open map $f\: Q \to \L(A)$ such that $\O(f)=\pi$, and so $\O(f) \circ \eta = \varphi$.
Because $\O$ is faithful and $\eta$ is surjective, the open map $f$ is unique. Therefore, the assignment $A \mapsto \L(A)$ extends to a functor $\cat{CHA} \to \cat{PosOpen}^{\mathrm{op}}$ that is left adjoint to $\O$. Furthermore, $\L(\O(P)) \iso P$ for each poset $P$ because $\O$ is full and faithful (see, e.g., \cite[p.~88]{Mac71}).
\end{proof}

\begin{theorem}\label{thm: CHA does not have binary coproducts}
The category $\cat{CHA}$ does not have a coproduct of $\O(\SS)$ with itself.
\end{theorem}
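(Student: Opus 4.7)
The plan is to derive the non-existence of the coproduct from \cref{thm: PreOpen does not have products}(2) via the adjunction $\L \dashv \O$ of \cref{prop: De Jongh Troelstra}. Left adjoints preserve colimits, and a coproduct in $\cat{PosOpen}^{\mathrm{op}}$ is precisely a product in $\cat{PosOpen}$, so any coproduct of $\O(\SS)$ with itself in $\cat{CHA}$ would be transported by $\L$ to a product of $\SS$ with itself in $\cat{PosOpen}$ — which does not exist.

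In more detail, suppose for contradiction that $\iota_1,\iota_2 \: \O(\SS) \to C$ is a coproduct of $\O(\SS)$ with itself in $\cat{CHA}$. Since $\L \: \cat{CHA} \to \cat{PosOpen}^{\mathrm{op}}$ is a left adjoint by \cref{prop: De Jongh Troelstra}, it preserves coproducts, so $\L(\iota_1),\L(\iota_2) \: \L(\O(\SS)) \to \L(C)$ is a coproduct of $\L(\O(\SS))$ with itself in $\cat{PosOpen}^{\mathrm{op}}$. Using the isomorphism $\L(\O(\SS)) \cong \SS$ provided by \cref{prop: De Jongh Troelstra}, this diagram yields a product of $\SS$ with itself in $\cat{PosOpen}$.

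The inspection of the proof of \cref{thm: PreOpen does not have products}(2) shows that what is actually established there is that $\SS$ does not have a product with itself in $\cat{PosOpen}$, so we have reached the desired contradiction and may conclude that no such coproduct $C$ exists in $\cat{CHA}$. There is no real obstacle here beyond unwinding the adjunction: the main work has already been done in Sections~2 and 3, and the only point worth verifying is that \cref{thm: PreOpen does not have products}(2) indeed rules out the single product $\SS \times \SS$ rather than only binary products in general, which is immediate from its proof.
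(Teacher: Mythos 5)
Your proposal is correct and is essentially the paper's own argument: assume the coproduct exists, apply the left adjoint $\L$ from \cref{prop: De Jongh Troelstra} to transport it to a coproduct of $\L(\O(\SS)) \cong \SS$ with itself in $\cat{PosOpen}^{\mathrm{op}}$, i.e.\ a product $\SS \times \SS$ in $\cat{PosOpen}$, contradicting the proof of \cref{thm: PreOpen does not have products}. The paper likewise cites the \emph{proof} of that theorem (which specifically rules out $\SS \times \SS$), exactly the point you flag at the end.
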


\begin{proof}
Assume that the coproduct $\O(\SS) \sqcup \O(\SS)$ exists in $\cat{CHA}$. By \cref{prop: De Jongh Troelstra}, the functor $\mathcal L\:\cat{CHA} \to \cat{PosOpen}^{\mathrm{op}}$ is a left adjoint, so it preserves coproducts, and in particular, the coproduct $\SS \sqcup \SS \iso \L(\O(\SS)) \sqcup \L(\O(\SS)) \iso \L(\O(\SS) \sqcup \O(\SS))$ exists in $\cat{PosOpen}^{\mathrm{op}}$. We conclude that the product of $\SS$ with itself exists in $\cat{PosOpen}$. This conclusion contradicts the proof of \cref{thm: PreOpen does not have products}. Therefore, the coproduct $\O(\SS) \sqcup \O(\SS)$ does not exist in $\cat{CHA}$.
\end{proof}

\begin{corollary}[\cite{deJ80}]
    The free complete Heyting algebra on two generators does not exist.
\end{corollary}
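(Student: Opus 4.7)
The plan is to argue by contradiction using \cref{thm: CHA does not have binary coproducts}: I would suppose that the free complete Heyting algebra on two generators $F_2$ exists, with generators $g_1, g_2$, and then exhibit the coproduct $\O(\SS) \sqcup \O(\SS)$ as an explicit quotient of $F_2$, which is the desired contradiction.

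First I would observe that CHA morphisms out of $\O(\SS) = \{0, a, 1\}$ correspond bijectively to elements $d$ in the target satisfying $\neg d = 0$, via $\varphi \mapsto \varphi(a)$. This is a short case check: preservation of $\vee$, $\wedge$, $0$, $1$ imposes no constraint on the image of $a$, and among the instances of $\varphi(x \to y) = \varphi(x) \to \varphi(y)$ the only non-trivial one is $\varphi(a \to 0) = \varphi(0) = 0$, which rewrites to $\neg d = 0$. Combined with the universal property of $F_2$, a pair of CHA morphisms $\O(\SS) \to D$ corresponds exactly to a CHA morphism $F_2 \to D$ sending each $g_j$ to an element $d_j$ with $\neg d_j = 0$.

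Next I would verify that $\cat{CHA}$ admits quotients by arbitrary sets of relations. The key point is that CHA congruences are closed under arbitrary intersection: if each congruence in a family $\{\equiv_\lambda\}$ contains pairs $(a_i, b_i)$, then the congruence property forces $(\bigvee a_i, \bigvee b_i)$, $(\bigwedge a_i, \bigwedge b_i)$, and $(a_0 \to a_1, b_0 \to b_1)$ (from pairs $(a_0, b_0)$ and $(a_1, b_1)$) into each $\equiv_\lambda$ and hence into the intersection. Consequently the smallest CHA congruence $\sim$ on $F_2$ satisfying $\neg g_1 \sim 0$ and $\neg g_2 \sim 0$ exists, and the quotient $C := F_2/{\sim}$ inherits a CHA structure with a universal quotient morphism $q\: F_2 \to C$.

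Finally, the maps $\iota_j\: \O(\SS) \to C$ defined by $a \mapsto q(g_j)$ are CHA morphisms, since $\neg q(g_j) = 0$ in $C$ by construction. Given any CHA morphisms $f_1, f_2\: \O(\SS) \to D$ with $f_j(a) = d_j$, each $d_j$ satisfies $\neg d_j = 0$, so the unique morphism $h\: F_2 \to D$ sending $g_j \mapsto d_j$ descends through $q$ to a unique mediating $\bar h\: C \to D$, and $\bar h \circ \iota_j = f_j$ follows by chasing generators. Thus $C$ is the coproduct $\O(\SS) \sqcup \O(\SS)$ in $\cat{CHA}$, contradicting \cref{thm: CHA does not have binary coproducts}. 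The main step to check carefully is the closure of CHA congruences under intersection; the rest is routine universal algebra combined with the characterization of morphisms from $\O(\SS)$.
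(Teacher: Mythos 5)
Your proposal is correct and follows essentially the same route as the paper: assume the free complete Heyting algebra on two generators exists, realize $\O(\SS)\sqcup\O(\SS)$ as the quotient of it that forces both generators to be dense (equivalently, kills their negations), and contradict \cref{thm: CHA does not have binary coproducts}. The only difference is bookkeeping: you form the quotient by the complete congruence generated by $\neg g_1\sim 0$ and $\neg g_2\sim 0$ (justifying its existence via closure of complete congruences under intersection), whereas the paper quotients by the principal filter $\upset(\neg\neg p\wedge\neg\neg q)$ with existence cited to Rasiowa--Sikorski; these determine the same congruence and hence the same quotient.
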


\begin{proof}
Assume that there is a free complete Heyting algebra $A$ on two generators $p$ and $q$. Let $F \subseteq A$ be the principal filter $F = \upset (\neg \neg p \wedge \neg \neg q)$, and let $B = A / F$. We claim that $B$ is a coproduct of $\O(\SS)$ with itself in $\cat{CHA}$, where the injections $i,j\:\O(\SS) \to A$ are given by $i(\{0\}) = [p]$ and $j(\{0\}) = [q]$.
$$
\begin{tikzcd}
\O(\SS)
\arrow{r}{i}
\arrow{rd}[swap]{f}
&
B
\arrow[dotted]{d}{!}
&
\O(\SS)
\arrow{l}[swap]{j}
\arrow{ld}{g}
\\
&
C
&
\end{tikzcd}
$$

Let $C$ be a complete Heyting algebra, and let $f, g\: \O(\SS) \to C$ be morphisms of complete Heyting algebras. Let $h\: A \to C$ be the morphism that is defined by $h(p) = f (\{0\})$ and $h(q) = g(\{0\})$. We calculate that
\begin{align*}
h(\neg\neg p \wedge \neg \neg q & ) = \neg \neg h(p) \wedge \neg \neg h(q) = \neg \neg f(\{0\}) \wedge \neg \neg g(\{0\}) \\ & = f( \neg \neg \{0\}) \wedge g(\neg\neg \{0\})  = f(\SS) \wedge g(\SS) = 1 \wedge 1 = 1.
\end{align*}
Thus, $h[F] =\{1\}$, and hence, $h$ factors through the quotient morphism $A \to B$ via a morphism $\widetilde h\: B \to C$ (see, e.g., \cite[Sec.~I.13]{RS1963}).

We note that 
\[
\widetilde h(i(\{0\})) = \widetilde h [p] = h(p) = f(\{0\}),
\]
and similarly, 
\[
\widetilde h(j(\{0\})) = \widetilde h [q] = h(q) = g(\{0\}).
\]
Thus, $\widetilde h \circ i = f$, and $\widetilde h \circ j = g$. Furthermore, $\widetilde h$ is the unique morphism that satisfies these two equations because a morphism out of $B$ is uniquely determined by its values on $[p]$ and $[q]$. We conclude that $B$ is indeed a coproduct of $\O(\SS)$ with itself in $\cat{CHA}$. This conclusion directly contradicts \cref{thm: CHA does not have binary coproducts}. Therefore, there is no free complete Heyting algebra on two generators.
\end{proof}

\begin{remark}
De Jongh \cite{deJ80} proved that the free complete Heyting algebra on two generators does not exist. This result implies that the category $\cat{CHA}$ does not have binary coproducts because the free complete Heyting algebra on one generator does exist (see, e.g., \cite[Sec.~I.4.11]{Joh1982}) and De Jongh's theorem implies that this algebra does not have a coproduct with itself. \cref{thm: CHA does not have binary coproducts} is a  stronger result: it shows that even $\O(\SS)$ does not have a coproduct with itself. 
\end{remark}

It is well known that the free Heyting algebra $H$ on one generator is isomorphic to the Heyting algebra of upsets of the Rieger-Nishimura ladder $X$ (see, e.g., \cite[p.~204]{CZ1997}). Therefore, $H$ is isomorphic to $\O(Y)$, where $Y$ is the order-dual of $X$. Since the free complete Heyting algebra on one generator is obtained by adjoining a new top to $H$ (see, e.g., \cite[Sec.~I.4.11]{Joh1982}), it is isomorphic to $\O(Z)$, where $Z$ is obtained by adjoining a new top to $Y$. Furthermore, $Y$ is well-ordered because $X$ is dually well-ordered. Thus, the free complete Heyting algebra on one generator is isomorphic to $\O(Z)$ for a well-ordered poset $Z$. However, no part of \cref{thm: PreOpen does not have products,thm: TopOpen does not have products} is a direct corollary of De Jongh's theorem because the functor $\O$ does not have a right adjoint in any of these cases, as we now show. 

\begin{proposition}\label{prop: O does not have a right adjoint}
The following functors do not have right adjoints:
\begin{enumerate}[label=\normalfont(\arabic*), ref = \theproposition(\arabic*)]
\item $\mathcal O\:\cat{TopOpen}^{\mathrm{op}} \to \cat{CHA}$,
\item $\mathcal O\:\cat{TopOpen}_0^{\mathrm{op}} \to \cat{CHA}$,
\item $\mathcal O\:\cat{SobOpen}^{\mathrm{op}} \to \cat{CHA}$,
\item $\mathcal O\:\cat{PreOpen}^{\mathrm{op}} \to \cat{CHA}$,
\item $\mathcal O\:\cat{PosOpen}^{\mathrm{op}} \to \cat{CHA}$,
\item $\mathcal O\:\cat{WellOpen}^{\mathrm{op}} \to \cat{CHA}$.
\end{enumerate}
\end{proposition}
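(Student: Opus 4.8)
\emph{Overview of the plan.} The plan is to show that a right adjoint cannot exist, not by exhibiting a colimit that $\O$ fails to preserve (in fact $\O$ seems to preserve every colimit that exists in these op-categories, since all the products that exist in $\cat{TopOpen}$, $\cat{PosOpen}$, etc.\ are products with discrete/antichain spaces, and these are preserved), but by refuting the representability that a right adjoint would force. Writing $\cat D$ for the relevant domain (so $\O\: \cat D^{\mathrm{op}} \to \cat{CHA}$), a right adjoint $R\: \cat{CHA}\to\cat D^{\mathrm{op}}$ would supply, for every complete Heyting algebra $A$, an object $RA\in\cat D$ together with natural bijections
\[
\cat{CHA}(\O(X), A) \;\cong\; \cat D(RA, X), \qquad X\in\cat D .
\]
I would feed this one carefully chosen $A$ and two cheap test objects $X$, reading off two contradictory facts about $RA$. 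The choice is $A =$ an atomless (hence infinite) complete Boolean algebra, viewed in $\cat{CHA}$; this is the prototypical complete Heyting algebra that is not of the form $\O(P)$, and such $A$ exist.

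\emph{The two probes.} First I would take $X = T$, the two-element antichain (its Alexandrov space is the two-point discrete space), so that $\O(T)$ is the four-element Boolean algebra. A $\cat{CHA}$-morphism $\O(T)\to A$ is determined by a complemented element of $A$, so $\cat{CHA}(\O(T),A)$ is the Boolean algebra of complemented elements of $A$, namely $A$ itself; while $\cat D(RA,T)$ is the clopen algebra $\mathrm{Clop}(RA)$. Since $T$ is an internal Boolean algebra (its finite powers exist in $\cat D$ and the Boolean operations are open maps), applying naturality of the displayed bijection to these operations shows it intertwines the two Boolean-algebra structures; hence $\mathrm{Clop}(RA)\cong A$ is atomless and infinite. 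Next I would take $X = \SS$. Then $\cat{CHA}(\O(\SS),A)$ is the set of dense elements $a$ of $A$ (those with $\neg a = 0$), which for Boolean $A$ is the singleton $\{1\}$; and $\cat D(RA,\SS)$ is the set of open maps $RA\to\SS$, which a direct computation (downsets open) identifies with the dense open subsets of $RA$. So $RA$ has exactly one dense open subset, namely $RA$ itself.

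\emph{The contradiction.} To close the gap I would prove the elementary lemma that a space whose clopen algebra is infinite and atomless must have a proper dense open subset. Indeed, if every open subset of $RA$ were closed, then $\O(RA)$ would be closed under complementation, hence under arbitrary intersections; each point would then have a least open neighborhood, complement-closedness would force these least neighborhoods to partition $RA$, and $\O(RA)$ would be the powerset of this partition and therefore atomic, contradicting atomlessness. Thus some open $U\subseteq RA$ is not closed, its boundary $\partial U = \overline U\setminus U$ is a nonempty closed nowhere dense set, and $RA\setminus\partial U$ is a proper dense open subset different from $RA$. This contradicts the previous paragraph, so no right adjoint exists. The argument is uniform over all six categories, because $T$ and $\SS$ are objects of each of them.

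\emph{Expected main obstacle.} The computations on the $\cat{CHA}$ side are immediate and the dense-open lemma is a two-line topological fact, so the only delicate step is upgrading the set bijection at the $T$-probe to a genuine isomorphism of Boolean algebras: I must verify that the representing isomorphism respects the internal Boolean-algebra structure of $T$ well enough to transport atomlessness to $\mathrm{Clop}(RA)$. This is routine but fussy. As a sanity check, and a shortcut in cases \textup{(4)}--\textup{(6)}, note that when $\cat D$ consists of preorders, posets, or well-founded posets, $RA$ is an Alexandrov space, so $\mathrm{Clop}(RA)$ is the powerset of its set of connected components and hence already atomic; this alone contradicts $\mathrm{Clop}(RA)\cong A$ and makes the $\SS$-probe unnecessary there.
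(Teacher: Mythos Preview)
Your argument is correct, and it shares the paper's core idea---feed the putative right adjoint an atomless complete Boolean algebra $A$ and derive an atomicity contradiction---but the execution diverges. The paper takes $A=\B(2^\omega)/\M$, probes with all the discrete spaces $2^n$ to show that the counit $\varepsilon\:\O(RA)\to A$ hits a generating basis and is therefore surjective, and then invokes the fact that a complete Heyting quotient of $\O(RA)$ is of the form $\O(U)$ for an open $U\subseteq RA$; since $\O(U)\cong A$ is Boolean, every open set of $U$ is clopen, forcing $\O(U)$ to be atomic. You instead use only the single discrete probe $T$ to obtain the sharper conclusion $\mathrm{Clop}(RA)\cong A$, then bring in the non-discrete probe $\SS$ to show $RA$ has no proper dense open, and finish with the elementary boundary lemma. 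Your route avoids naming a specific atomless $A$ and bypasses the ``complete quotients are principal downsets'' step; the paper's route is shorter and needs no second probe. Your Alexandrov shortcut for cases (4)--(6) is a genuine simplification not in the paper.

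One remark on the step you flagged as fussy: there is no need to chase the internal Boolean-algebra structure of $T$ through naturality. The adjunction bijection $\cat D(RA,T)\to\cat{CHA}(\O(T),A)$ sends $f$ to $\varepsilon_A\circ\O(f)$, so under the identifications $f\leftrightarrow f^{-1}(1)\in\mathrm{Clop}(RA)$ and $\varphi\leftrightarrow\varphi(\{1\})\in A$ it is literally the restriction $\varepsilon_A|_{\mathrm{Clop}(RA)}$. Since $\varepsilon_A$ is a morphism in $\cat{CHA}$ it preserves finite meets, finite joins, and (Heyting, hence Boolean) complements, so this restriction is automatically a Boolean-algebra isomorphism. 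This dissolves your ``expected main obstacle'' entirely.
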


\begin{proof}
We prove (1); the proofs of (2)--(6) are identical.
Assume that $\O$ has a right adjoint $\R\: \cat{CHA} \to \cat{TopOpen}^{\mathrm{op}}$. For each natural number $n$, let $2^n$ be the $n$-fold Cartesian power of the set $2 =\{0,1\}$. Regarding $2^n$ as a discrete topological space, we have that $\O(2^n)$ is the powerset of $2^n$. Let $2^\omega$ be the Cantor space, let $\B(2^\omega)$ be its Borel $\sigma$-algebra, and let $\M \subseteq \B(2^\omega)$ be the set of meager Borel sets. Then, $\B(2^\omega)/\M$ is a complete atomless Boolean algebra; indeed, it is isomorphic to the regular open subsets of $2^\omega$ (see, e.g., \cite[Prop.~12.9]{Kop89}).

Let $f_n \: \O(2^n) \to \B(2^\omega)/\M$ be the morphism that maps each subset of $2^n$ to the corresponding clopen subset of $2^\omega$. Let $g_n\: \O(2^n) \to \O(\R(\B(2^\omega)/\M))$ be a morphism such that $\varepsilon \circ g_n = f_n$, where $\varepsilon$ is the counit for the adjunction $\O \dashv \R$ at $\B(2^\omega)/\M$. Thus, the following diagram commutes.
$$
\begin{tikzcd}
\O(\R(\B(2^\omega)/\M))
\arrow{r}{\varepsilon}
&
\B(2^\omega)/\M
\\
\O(2^n)
\arrow{ru}[swap]{f_n}
\arrow[dotted]{u}{g_n}
&
\end{tikzcd}
$$

Hence, the range of $\varepsilon$ contains $\bigcup_{n=1}^\infty f_n[\O(2^n)]$, which is a basis for the topology on~$2^\omega$. This basis generates $\B(2^\omega)/\M$ as a complete Boolean algebra, so $\B(2^\omega)/\M$ is a complete homomorphic image of the complete Heyting algebra $\O(\R(\B(2^\omega)/\M))$. Consequently, $\B(2^\omega)/\M \iso \downset U = \O(U)$ for some open subset $U \subseteq \R(\B(2^\omega)/\M)$ (see, e.g., \cite[Sec.~IV.8]{RS1963}).   
This yields that every open subset of $U$ is closed in $U$, which implies that $\O(U)$ is atomic. Thus, $\B(2^\omega)/\M$ is both atomic and atomless, in contradiction to the Baire category theorem. Therefore, the functor $\O$ does not have a right adjoint.
\end{proof}

\section{Kripke frames and \textsc{bao}s}

Posets and preordered sets are examples of Kripke frames, which play an important role in modal logic (see, e.g, \cite{BdRV2001}). We show that the category of Kripke frames and p-morphisms also lacks binary products. From this we derive that the category of complete \textsc{bao}s (as well as its full subcategory consisting of closure algebras) lacks binary coproducts. 

\begin{definition}
The category $\cat{KrF}$ of \emph{Kripke frames} and \emph{p-morphisms} is defined as follows:
\begin{enumerate}
\item an object is a set $X$ together with a binary relation $R$ on $X$,
\item a morphism from $(X,R)$ to $(Y,S)$ is a map $f\:X \to Y$ such that $f[R[x]] = S[f(x)]$ for all $x\in X$ or, equivalently, if $f^{-1}[S^{-1}(y)]=R^{-1}[f^{-1}(y)]$ for all $y \in Y$.
\end{enumerate}
\end{definition}

\begin{remark}\label{rem: PreOpen}
By \cref{lem: open maps}, $\cat{PreOpen}$ is equivalent to a full subcategory of $\cat{KrF}$. This equivalence maps each preorder $P$ to its opposite $P^{\mathrm{op}}$.
\end{remark}

\begin{theorem}\label{thm: KrF}
The category $\cat{PreOpen}$ is equivalent to a coreflective subcategory of $\cat{KrF}$. Therefore, $\cat{KrF}$ does not have binary products.
\end{theorem}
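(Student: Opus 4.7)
By Remark~\ref{rem: PreOpen}, $\cat{PreOpen}$ embeds as a full subcategory $\cat{KrF}_{\mathrm{pre}} \subseteq \cat{KrF}$ whose objects are the reflexive transitive Kripke frames. The plan is to exhibit this as a coreflective subcategory by constructing an explicit right adjoint to the inclusion, and then to invoke the fact that right adjoints preserve products in order to derive the second assertion.

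Given $(X,R) \in \cat{KrF}$, define $F(X) \subseteq X$ to be the union of all subsets $Y \subseteq X$ such that (a) $R[y] \subseteq Y$ for each $y \in Y$, (b) $y R y$ for each $y \in Y$, and (c) the restriction $R \cap (Y \times Y)$ is transitive. I would first observe that the collection of such $Y$ is closed under arbitrary unions: for (c), if $y \in Y_i$ and $y R z R w$, then $z, w \in Y_i$ by two applications of (a), so transitivity inside $Y_i$ gives $y R w$. Hence $F(X)$ is the largest such subset. Conditions (b) and (c) then make $F(X)$ a reflexive transitive Kripke frame, i.e.\ an object of $\cat{KrF}_{\mathrm{pre}}$, while (a) ensures that the inclusion $\iota_X \: F(X) \hookrightarrow X$ is a p-morphism.

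Next, I would establish the universal property: for any $(P, R_P) \in \cat{KrF}_{\mathrm{pre}}$ and any p-morphism $f \: (P, R_P) \to (X, R)$, the image $f[P]$ satisfies (a)--(c). Indeed, (a) follows from $f[R_P[p]] = R[f(p)] \subseteq f[P]$, and (b) from $f(p) \in f[R_P[p]] = R[f(p)]$ using reflexivity of $R_P$. For (c), given $f(p) R z R w$, one uses the p-morphism identity to write $z = f(p')$ for some $p' \in R_P[p]$ and then $w = f(p'')$ for some $p'' \in R_P[p']$; transitivity of $R_P$ yields $p'' \in R_P[p]$, whence $w = f(p'') \in f[R_P[p]] = R[f(p)]$. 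Consequently $f[P] \subseteq F(X)$, and $f$ factors uniquely through $\iota_X$ as a p-morphism $P \to F(X)$. Applying this universal property to compositions $F(X) \hookrightarrow X \xrightarrow{h} Y$ upgrades $F$ to a functor $\cat{KrF} \to \cat{KrF}_{\mathrm{pre}}$ that is right adjoint to the inclusion.

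Having shown that $\cat{KrF}_{\mathrm{pre}} \simeq \cat{PreOpen}$ is a coreflective subcategory of $\cat{KrF}$, the second assertion is immediate: a right adjoint preserves all limits, so binary products in $\cat{KrF}$ would descend to binary products in $\cat{KrF}_{\mathrm{pre}}$, contradicting Theorem~\ref{thm: PreOpen does not have products}. The main conceptual step is identifying the correct closure conditions (a)--(c) that cut out the maximal ``preorder part'' of an arbitrary Kripke frame; once these are in place, union closure, the p-morphism property of $\iota_X$, and the factorization argument are all routine unwindings of the defining equation $f[R_P[p]] = R[f(p)]$.
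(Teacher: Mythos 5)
Your proposal is correct and follows essentially the same route as the paper: the paper likewise cuts out the largest upward-closed subset on which the relation is a reflexive and transitive (your conditions (a)--(c)), verifies via the p-morphism identity that the image of any preorder lands inside it to get the coreflection, and then invokes the fact that right adjoints preserve products together with \cref{thm: PreOpen does not have products}. The only cosmetic difference is that the paper explicitly tracks the passage $P \mapsto P^{\mathrm{op}}$ coming from \cref{rem: PreOpen}, whereas you work directly with the full subcategory of reflexive transitive frames; the substance is identical.
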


\begin{proof}
    Let $\mathfrak F = (X,R)$ be a Kripke frame. A subset $U$ of $X$ is an {\em $R$-upset} of $\mathfrak F$ provided $x\in U$ and $x\mathrel{R}y$ imply $y\in U$. 
    Let $\mathcal U$ be the set of all $R$-upsets $U$ of $\mathfrak F$ such that $(U,R_U)$ is a preorder, where $R_U := R \cap U^2$ is the restriction of $R$ to $U$. Set $Y=\bigcup\mathcal U$. Clearly $Y$ is an $R$-upset of $\mathfrak F$, and the restriction $R_Y$ is reflexive on $Y$. To see that it is transitive, let $x,y,z\in Y$ with $x\mathrel{R_Y}y\mathrel{R_Y}z$. Then $x\in U$ for some $U\in\mathcal U$, and hence also $y,z \in U$ since $U$ is an $R$-upset. It follows that $x\mathrel{R_Y}z$, since the restriction of $R$ to $U$ is transitive. Thus, $Y$ is the largest $R$-upset of $\mathfrak F$ that is a preorder.
    
    Let $\mathcal R (\mathfrak F)$ be $Y$ ordered by the converse of $R$, and let $\varepsilon\: \mathcal R(\mathfrak F)^{\mathrm{op}} \to \mathfrak F$ be the inclusion map, which is a p-morphism because $Y$ is an $R$-upset of $\mathfrak F$.
    Let $P$ be a preorder, and let $f: P^{\mathrm{op}} \to \mathfrak F$ be a p-morphism.
    Then $f[P]$ is an $R$-upset of $\mathfrak F$. Since $p\le q$ implies $f(q) \mathrel{R} f(p)$, the restriction of $R$ to $f[P]$ is reflexive. To see that it is transitive, let $p,q,r\in P$ with $f(p)\mathrel{R}f(q)\mathrel{R}f(r)$. Because $f$ is a p-morphism, there exist $q',r'\in P$ such that $r'\le q'\le p$, $f(q')=f(q)$, and $f(r')=f(r)$. Therefore, $r'\le p$, and so $f(p) \mathrel{R} f(r)$. Thus, $f[P]\in\mathcal U$, and hence $f[P]\subseteq Y$. We conclude that $f$ factors through $\varepsilon$ via $g^{\mathrm{op}}$, where $g\: P \to \mathcal R(\mathfrak F)$ is an open map by \cref{lem: open maps}.
    $$
    \begin{tikzcd}
    \mathcal R(\mathfrak F)^{\mathrm{op}}
    \arrow{r}{\varepsilon}
    &
    \mathfrak F
    \\
    P^{\mathrm{op}}
    \arrow{ru}[swap]{f}
    \arrow[dotted]{u}{g^{\mathrm {op}}}
    &
    \end{tikzcd}
    $$
    The open map $g$ is unique because $\varepsilon$ is an inclusion.
    Thus, we conclude that
    $\cat{PreOpen}$ is equivalent to a coreflective subcategory of $\cat{KrF}$. By \cref{thm: PreOpen does not have products}, $\cat{PreOpen}$ does not have binary products. Therefore, neither does $\cat{KrF}$.  
\end{proof}

A \emph{Boolean algebra with an operator} (\textsc{bao}) is a Boolean algebra $B$ that is equipped with a map $\Diamond\: B \to B$ that preserves finite joins (see \cite[Def.~2.13]{JT1951} and also \cite[Sec.~5.3]{BdRV2001}). A \textsc{bao} \emph{homomorphism} $(B_1, \Diamond_1) \to (B_2, \Diamond_2)$ is a Boolean algebra homomorphism $\varphi: B_1 \to B_2$ such that $\varphi(\Diamond_1 a) = \Diamond_2\varphi(a)$ for all $a \in B_1$. Furthermore, a {\em closure algebra} is a \textsc{bao} $(B,\Diamond)$ such that $a\le\Diamond a$ and $\Diamond\Diamond a\le\Diamond a$ for all $a \in B$ \cite[Def.~1.1]{MT1944}. Let $\cat{CBAO}$ be the category of complete \textsc{bao}s and complete \textsc{bao} homomorphsims, and let $\cat{CCA}$ be the full subcategory of $\cat{CBAO}$ consisting of closure algebras.

\begin{theorem}
The categories $\cat{CBAO}$ and $\cat{CCA}$ do not have binary coproducts.    
\end{theorem}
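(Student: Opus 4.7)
The plan is to mirror the proof of \cref{thm: CHA does not have binary coproducts}: establish an analogue of \cref{prop: De Jongh Troelstra} giving a left adjoint to a natural functor from Kripke frames to complete BAOs, then apply \cref{thm: KrF}.

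First, I would define the functor $\P \: \cat{KrF}^{\mathrm{op}} \to \cat{CBAO}$ sending each Kripke frame $(X, R)$ to its powerset algebra $(\P(X), \Diamond_R)$, where $\Diamond_R A := R^{-1}[A]$, and sending each p-morphism $f$ to the inverse image homomorphism $f^{-1}[-]$. By J\'onsson--Tarski duality, $\P$ is full and faithful, and by \cref{rem: PreOpen} it restricts to a full and faithful functor $\cat{PreOpen}^{\mathrm{op}} \to \cat{CCA}$, since reflexive and transitive Kripke frames correspond to closure algebras.

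Second, I would prove an analogue of \cref{prop: De Jongh Troelstra}: the functor $\P$ has a left adjoint $\L$ with $\L(\P(\mathfrak{F})) \cong \mathfrak{F}$ for each Kripke frame $\mathfrak{F}$. Paralleling the construction in \cref{prop: De Jongh Troelstra}, for a complete BAO $B$ let $\S \subseteq B$ consist of those elements $a$ such that the relative algebra $\downset a$ (with induced operator $b \mapsto \Diamond b \wedge a$) is isomorphic to $\P(\mathfrak{F})$ for some Kripke frame $\mathfrak{F}$; equivalently, such that $\downset a$ is atomic and its operator preserves arbitrary joins. Using the join-infinite distributive law for the underlying complete Boolean algebra, one shows $\S$ is closed under arbitrary joins, so $s := \bigvee \S$ is the largest element of $\S$. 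Then $\L(B)$ is defined to be the J\'onsson--Tarski dual of $\downset s$: its underlying set is the atoms of $\downset s$, with relation induced by $\Diamond$. The universal property follows exactly as in \cref{prop: De Jongh Troelstra}.

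Third, I would conclude: if $\cat{CBAO}$ had a coproduct $\P(\SS) \sqcup \P(\SS)$, then since $\L$ preserves coproducts, $\SS \sqcup \SS \cong \L(\P(\SS)) \sqcup \L(\P(\SS)) \cong \L(\P(\SS) \sqcup \P(\SS))$ would exist in $\cat{KrF}^{\mathrm{op}}$, so $\SS \times \SS$ would exist in $\cat{KrF}$, contradicting \cref{thm: KrF}. The same argument with $\cat{CCA}$ in place of $\cat{CBAO}$, $\cat{PreOpen}$ in place of $\cat{KrF}$, and \cref{thm: PreOpen does not have products} in place of \cref{thm: KrF} handles closure algebras. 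The main obstacle is step~2: the extra operator $\Diamond$ complicates the argument of \cref{prop: De Jongh Troelstra}, and verifying that $\S$ is closed under arbitrary joins requires showing that both atomicity and the operator's preservation of arbitrary joins survive under suprema, which amounts to showing that the image of $\P$ is a reflective subcategory of $\cat{CBAO}$.
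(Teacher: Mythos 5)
Your overall architecture is the same as the paper's: the powerset functor $\wp\:\cat{KrF}^{\mathrm{op}}\to\cat{CBAO}$ (full and faithful --- by Thomason duality, which is the categorical statement you need, rather than the J\'{o}nsson--Tarski representation theorem), a left adjoint $\L$ obtained from the largest element $s$ whose principal downset is a power-set frame algebra, and the transfer of coproducts along the adjunction; the concluding step and the $\cat{CCA}$ variant are fine. But your step~2, which you yourself flag as ``the main obstacle,'' is the entire substance of the proof, and as you have set it up it does not go through. Your set $\S$ omits the crucial condition $a\le\Box a$, where $\Box=\lnot\Diamond\lnot$. That condition is what makes $\upset a$ a congruence filter, i.e.\ what makes $\rho_a\:b\mapsto a\wedge b$ a morphism of complete \textsc{bao}s onto $\downset a$ with operator $\Diamond_a b=a\wedge\Diamond b$; requiring merely that $\downset a$ be abstractly isomorphic to some $\wp(\mathfrak G)$ is strictly weaker. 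For example, in $\wp(X)$ with $X=\{x,y\}$ and $R=\{(x,y)\}$, the element $a=\{x\}$ has $\downset a$ isomorphic to the powerset algebra of a one-point frame, yet $a\not\le\Box a$ and $\rho_a$ does not commute with $\Diamond$. Without $\Box$-stability of the members of $\S$ you cannot conclude $s\le\Box s$, and $\rho_s$ is a \textsc{bao} morphism \emph{iff} $s\le\Box s$ (take $b=\lnot s$ in $s\wedge\Diamond b=\Diamond_s(s\wedge b)$); so your reflection map $\eta=\varepsilon\circ\rho_s$ need not be a morphism of $\cat{CBAO}$ at all, and the universal property has no chance of being formulated, let alone proved.

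The same omission undermines your claim that the join-infinite distributive law shows $\S$ is closed under joins: JID does give atomicity of $\downset s$, but complete additivity of $\Diamond_s$ is the genuinely delicate point. The paper proves it via the auxiliary inequality $\Box a\wedge\Diamond b\le\Diamond(a\wedge b)$, applied to an atom $x\le a$ with $a\in\S$, and this is exactly where $a\le\Box a$ is used a second time. So the repair is to define $\S=\{a\in A: a\le\Box a$ and $\downset a\cong\wp(\mathfrak G)$ for some frame $\mathfrak G\}$, derive $s\le\Box s$ from monotonicity of $\Box$, and prove complete additivity of $\Diamond_s$ via the displayed inequality; with that done, and with the small observation (needed only if you insist on the specific pair $\wp(\SS),\wp(\SS)$ rather than an arbitrary pair furnished by \cref{thm: KrF}) that the coreflection argument shows the product of $\SS$ with itself in particular fails in $\cat{KrF}$, your argument coincides with the paper's.
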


\begin{proof}
The proof of the two statements is similar. 
    We associate with each Kripke frame $\mathfrak F=(X,R)$ the \textsc{bao} $\wp(\mathfrak F) := (\wp(X),\Diamond_R)$, where $\wp(X)$ is the powerset of $X$ and 
    \[
    \Diamond_R U = R^{-1}[U] = \{x\in X:\ x \mathrel{R} u \mbox{ for some } u\in U\}.
    \]
    Also, with each p-morphism $f$ we associate the \textsc{bao} homomorphism $f^{-1}$. This defines a functor $\wp:\cat{KrF}^{\mathrm{op}}\to\cat{CBAO}$. By \cite[Thm.~3.13]{JT1951}, $R$ is a preorder iff the \textsc{bao} $(\wp(X),\Diamond_R)$ is a closure algebra. 
    Thus, $\wp$ restricts to a functor $\cat{PreOpen}^{\mathrm{op}}\to\cat{CCA}$ (see \cref{rem: PreOpen}). 
    
    By Thomason duality \cite{Tho1975}, $\wp$ is full and faithful. 
We show that it has a left adjoint $\L :\ \cat{CBAO} \to \cat{KrF}^{\mathrm{op}}$. Let $A$ be a complete \textsc{bao}. 
As is customary, let $\Box = \neg\Diamond\neg$. Then $\Box$ is a unary function on $A$ preserving finite meets. 
It is well known (see, e.g., \cite[Thm.~7.70]{CZ1997}) that if $a\in A$ satisfies $a\le \Box a$, then $A/\upset a$ is a homomorphic image of $A$. In this case, $A/\upset a$ is isomorphic to $\downset a$, where the corresponding operator on $\downset a$ is defined by $\Diamond_a b = a\wedge\Diamond b$ for each $b\le a$. 

Let $\S$ be the set of all $a \in A$ such that $a\le \Box a$ and $\downset a \cong \wp(\mathfrak F)$ for some Kripke frame $\mathfrak F$. By Thomason duality, 
the latter condition is equivalent to the following three conditions: $\downset a$ is complete and atomic and $\Diamond_a$ is completely additive (that is, $\Diamond_a\bigvee T=\bigvee\{\Diamond_a t:\ t\in T\}$). Let $s=\bigvee\S$. We show that $s \in \S$, which implies that $s$ is the largest element of $\S$. We have \[
s = \bigvee\S \le \bigvee\{\Box a :\ a\in \S\} \le \Box s.
\]
It is clear that $\downset s$ is complete. To see that it is atomic, let $0\ne b\le s$. Since $A$ is a complete Boolean algebra, $b=b\wedge s=\bigvee\{b \wedge a :\ a\in \S\}$, so there is $a\in\S$ with $b\wedge a\ne 0$. Because $\downset a$ is atomic, there is an atom under $b\wedge a$. Thus, $\downset s$ is atomic.

To show that $\Diamond_s$ is completely additive, we first observe that $\Box a \wedge \Diamond b \le \Diamond(a\wedge b)$ for all elements $a$ and $b$ in any \textsc{bao}. Indeed,
\begin{eqnarray*}
    & & \Box b \wedge \Diamond a \wedge \lnot\Diamond a = 0 \\ 
    & \Longrightarrow & 
    \Diamond a \wedge \Box b \wedge \Box\lnot a = 0  \\
    & \Longrightarrow &
    \Diamond a \wedge \Box (b \wedge \lnot a) = 0  \\ 
    & \Longrightarrow &
    \Diamond a \wedge \Box (b \wedge (\lnot a \vee \lnot b)) = 0 \\
    & \Longrightarrow &
    \Diamond a \wedge \Box b \wedge \Box \lnot (a \wedge b) = 0 \\ 
    & \Longrightarrow &
    \Diamond a \wedge \Box b \wedge \lnot \Diamond (a \wedge b) = 0 \\
    & \Longrightarrow &
    \Diamond a \wedge \Box b \le \Diamond (a \wedge b).
\end{eqnarray*}

Now, let $T\subseteq\downset s$, and let $x$ be an atom in $\downset s$ such that $x\le\Diamond_s \bigvee T = s\wedge\Diamond\bigvee T$. Since $x$ is an atom, $x\le a$ for some $a\in\S$. Therefore, by the above inequality,
\[
x \le a\wedge\Diamond \bigvee T \le \Box a\wedge\Diamond\bigvee T \le \Diamond (a \wedge \bigvee T) = \Diamond \bigvee \{ a\wedge t :\ t \in T \}.
\]
Thus, $x\le \Diamond_a \bigvee \{ a\wedge t :\ t \in T \}$, so $x \le \bigvee \{ \Diamond_a (a\wedge t) :\ t \in T\}$ since $\Diamond_a$ is completely additive. But then $x \le \bigvee \{ \Diamond_s t :\ t \in T\}$, and so $\Diamond_s$ is completely additive. 
Consequently, $s \in \S$. (In fact, $s=1$ iff $A\cong\wp(\mathfrak F)$ for some Kripke frame $\mathfrak F$.)

Hence, $\downset s \iso \wp(\L(A))$ for some Kripke frame $\mathfrak \L(A)$. Explicitly, $\mathfrak \L(A) = (X,R)$, where $X$ is the set of atoms of $\downset s$ and $x \mathrel{R} y$ iff $x\le\Diamond_s y$.  
Let $\varepsilon\:\downset s \to \wp(\L(A))$ be the isomorphism. Define $\eta\: A \to \wp(\L(A))$ as the composition $\eta = \varepsilon\circ\rho_s$, where we recall that $\rho_s(a)=s\wedge a$.  
Since $\eta$ is a composition of two morphisms of complete \textsc{bao}s, $\eta$ is a morphism of \textsc{bao}s. We show that $\eta$ has the desired universal property.

Let $\mathfrak F$ be a Kripke frame, and let $\varphi: A \to \wp(\mathfrak F)$ be a morphism of complete \textsc{bao}s. 
Because complete homomorphic images of \textsc{bao}s are determined by principal filters $\upset x$ such that $x\le\Box x$ and $A/\upset x \cong \downset x$, 
we obtain that $\varphi$ factors through the morphism $\rho_a:\ A \to \downset a$ via an injective morphism $\iota: \downset a \to \wp(\mathfrak F)$ with $a \leq \Box a$. Since $\downset a$ is isomorphic to a complete subalgebra of $\wp(\mathfrak F)$, by Thomason duality it is isomorphic to $\wp(\mathfrak G)$ for some Kripke frame $\mathfrak G$, and hence $a\le s$. Thus, we obtain a morphism $\pi\: \wp(\L(A)) \to \wp(\mathfrak F)$ such that $\pi \circ \eta = \varphi$.

$$
\begin{tikzcd}
A
\arrow{r}{\rho_s}
\arrow{rd}{\rho_a}
\arrow[bend right = 30]{rrdd}[swap]{\varphi}
\arrow[bend left = 25]{rr}{\eta}
&
\downset s
\arrow{d}{}
\arrow{r}{\varepsilon}
&
\wp(\L(A))
\arrow[dotted]{dd}{\pi}
\\
&
\downset a
\arrow{rd}{\iota}
&
\\
&
&
\wp(\mathfrak F)
\end{tikzcd}
$$

Because $\wp$ is full, there exists a p-morphism $f\: \mathfrak F \to \L(A)$ such that $\wp(f)=\pi$, and so $\wp(f) \circ \eta = \varphi$.
Because $\wp$ is faithful and $\eta$ is surjective, the p-morphism $f$ is unique. Therefore, the assignment $A \to \L(A)$ extends to a functor $\cat{CBAO} \to \cat{KrF}^{\mathrm{op}}$ that is left adjoint to $\wp$. Furthermore, $\L(\wp(\mathfrak F)) \iso \mathfrak F$ for each Kripke frame $\mathfrak F$ because $\wp$ is full and faithful.

By \cref{thm: KrF}, there exist Kripke frames $\mathfrak F$ and $\mathfrak G$ whose product does not exist in $\cat{KrF}$. Assume that a binary coproduct $\wp(\mathfrak F) \sqcup \wp(\mathfrak G)$ exists in $\cat{CBAO}$. Since $\mathcal L$ is left adjoint to $\wp$, 
it preserves coproducts, and in particular, the coproduct 
\[
\mathfrak F \sqcup \mathfrak G \iso \L(\wp(\mathfrak F)) \sqcup \L(\wp(\mathfrak G)) \iso \L(\wp(\mathfrak F) \sqcup \wp(\mathfrak G))
\]
exists in $\cat{KrF}^{\mathrm{op}}$, so the product $\mathfrak F \times \mathfrak G$ exists in $\cat{KrF}$. This contradiction proves that the coproduct $\wp(\mathfrak F) \sqcup \wp(\mathfrak G)$ does not exist in $\cat{CBAO}$.
\end{proof}

\section*{Acknowledgements}

We would like to thank George Janelidze, Mamuka Jibladze, and Peter Johnstone for interesting discussions.

\newcommand{\etalchar}[1]{$^{#1}$}


\begin{thebibliography}{GHK{\etalchar{+}}03}

\bibitem[BD74]{BP74}
R.~Balbes and P.~Dwinger.
\newblock {\em Distributive lattices}.
\newblock University of Missouri Press, Columbia, MO, 1974.

\bibitem[BdRV01]{BdRV2001}
P.~Blackburn, M.~de~Rijke, and Y.~Venema.
\newblock {\em Modal logic}, volume~53 of {\em Cambridge Tracts in Theoretical
  Computer Science}.
\newblock Cambridge University Press, Cambridge, 2001.

\bibitem[Bez06]{Nic06}
N.~Bezhanishvili.
\newblock {\em Lattices of Intermediate and Cylindric Modal Logics}.
\newblock PhD thesis, ILLC, University of Amsterdam, 2006.

\bibitem[CZ97]{CZ1997}
A.~Chagrov and M.~Zakharyaschev.
\newblock {\em Modal logic}, volume~35 of {\em Oxford Logic Guides}.
\newblock The Clarendon Press, Oxford University Press, New York, 1997.

\bibitem[DJ80]{deJ80}
D.~H.~J. De~Jongh.
\newblock A class of intuitionistic connectives.
\newblock In {\em The {K}leene {S}ymposium ({P}roc. {S}ympos., {U}niv.
  {W}isconsin, {M}adison, {W}is., 1978)}, volume 101 of {\em Stud. Logic Found.
  Math.}, pages 103--111. North-Holland, Amsterdam-New York, 1980.

\bibitem[DJT66]{DT1966}
D.~H.~J. De~Jongh and A.~S. Troelstra.
\newblock On the connection of partially ordered sets with some
  pseudo-{B}oolean algebras.
\newblock {\em Indag. Math.}, 28:317--329, 1966.

\bibitem[EGP07]{EGP2007}
M.~Ern\'{e}, M.~Gehrke, and A.~Pultr.
\newblock Complete congruences on topologies and down-set lattices.
\newblock {\em Appl. Categ. Structures}, 15(1-2):163--184, 2007.

\bibitem[Eps13]{117174}
A.~Epstein.
\newblock Category of topological spaces with open or closed maps.
\newblock MathOverflow, 2013.
\newblock \texttt{URL:mathoverflow.net/q/117174} (version: 2013-01-08).

\bibitem[Esa19]{Esakia2019}
L.~Esakia.
\newblock {\em Heyting algebras}, volume~50 of {\em Trends in Logic---Studia
  Logica Library}.
\newblock Springer, Cham, 2019.
\newblock Translated from the original 1985 Russian edition by A. Evseev.

\bibitem[GHK{\etalchar{+}}03]{GHKLMS2003}
G.~Gierz, K.~H. Hofmann, K.~Keimel, J.~D. Lawson, M.~Mislove, and D.~S. Scott.
\newblock {\em Continuous lattices and domains}, volume~93 of {\em Encyclopedia
  of Mathematics and its Applications}.
\newblock Cambridge University Press, Cambridge, 2003.

\bibitem[Gui66]{MR0201989}
A.~Guichardet.
\newblock Sur la cat\'egorie des alg\`ebres de von {N}eumann.
\newblock {\em Bull. Sci. Math. (2)}, 90:41--64, 1966.

\bibitem[Hoc69]{Hoc1969}
M.~Hochster.
\newblock Prime ideal structure in commutative rings.
\newblock {\em Trans. Amer. Math. Soc.}, 142:43--60, 1969.

\bibitem[Jec06]{Jech2006}
T.~Jech.
\newblock {\em Set theory: The third millennium edition}.
\newblock Springer Monographs in Mathematics. Springer-Verlag, Berlin, 2006.

\bibitem[Joh82]{Joh1982}
P.~T. Johnstone.
\newblock {\em Stone spaces}, volume~3 of {\em Cambridge Studies in Advanced
  Mathematics}.
\newblock Cambridge University Press, Cambridge, 1982.

\bibitem[JT51]{JT1951}
B.~J\'{o}nsson and A.~Tarski.
\newblock Boolean algebras with operators. {I}.
\newblock {\em Amer. J. Math.}, 73:891--939, 1951.

\bibitem[Kop89]{Kop89}
S.~Koppelberg.
\newblock {\em Handbook of {B}oolean algebras. {V}ol. 1}.
\newblock North-Holland Publishing Co., Amsterdam, 1989.

\bibitem[Mac71]{Mac71}
S.~MacLane.
\newblock {\em Categories for the working mathematician}, volume Vol. 5 of {\em
  Graduate Texts in Mathematics}.
\newblock Springer-Verlag, New York-Berlin, 1971.

\bibitem[MT44]{MT1944}
J.~C.~C. McKinsey and A.~Tarski.
\newblock The algebra of topology.
\newblock {\em Ann. of Math. (2)}, 45:141--191, 1944.

\bibitem[Pav22]{MR4310049}
D.~Pavlov.
\newblock Gelfand-type duality for commutative von {N}eumann algebras.
\newblock {\em J. Pure Appl. Algebra}, 226(4):Paper No. 106884, 53, 2022.

\bibitem[PP12]{PP2012}
J.~Picado and A.~Pultr.
\newblock {\em Frames and locales}.
\newblock Frontiers in Mathematics. Birkh\"{a}user/Springer Basel AG, Basel,
  2012.

\bibitem[RS63]{RS1963}
H.~Rasiowa and R.~Sikorski.
\newblock {\em The mathematics of metamathematics.}
\newblock Pa\'{n}stwowe Wydawnictwo Naukowe, Warsaw, 1963.

\bibitem[Tho75]{Tho1975}
S.~K. Thomason.
\newblock Categories of frames for modal logic.
\newblock {\em J. Symbolic Logic}, 40(3):439--442, 1975.

\end{thebibliography}
\end{document}